\newcommand{\g}{\geqslant}
\newcommand{\ar}{\rangle}
\newcommand{\al}{\langle}
\newcommand{\RR}{\mathbb{R}}
\newcommand{\CC}{\mathbb{C}}
\newcommand{\NN}{\mathbb{N}}
\newcommand{\p}{\partial}
\newcommand{\les}{\leqslant}
\newcommand{\lesa}{\lesssim}
\newcommand{\supp}{\text{supp }\,}
\theoremstyle{plain}
\newtheorem{theorem}{Theorem}
\newtheorem{proposition}[theorem]{Proposition}
\newtheorem{lemma}[theorem]{Lemma}
\newtheorem{corollary}[theorem]{Corollary}
\theoremstyle{remark}
\newtheorem{remark}{Remark}
\title[Local and Global Well-posedness for CSD]{Local and Global well-posedness for the Chern-Simons-Dirac System in One Dimension}
\author[Nikolaos Bournaveas, Timothy Candy, and Shuji Machihara]{Nikolaos Bournaveas and Timothy Candy \vspace{0.3cm}\\
\textit{D\lowercase{epartment} \lowercase{of} M\lowercase{athematics}, U\lowercase{niversity} \lowercase{of} E\lowercase{dinburgh}\\
E\lowercase{dinburgh} EH9 3JE, U\lowercase{nited} K\lowercase{ingdom} \vspace{0.6cm}\\}
Shuji Machihara \vspace{0.3cm}\\
\textit{D\lowercase{epartment}  \lowercase{of} M\lowercase{athematics}, F\lowercase{aculty} \lowercase{of} E\lowercase{ducation}, S\lowercase{aitama} U\lowercase{niversity},\\
 255 S\lowercase{himo}-O\lowercase{kubo}, S\lowercase{akura}-\lowercase{ku}, S\lowercase{aitama} C\lowercase{ity} 338-8570, J\lowercase{apan} }}
\thanks{AMS Subject Classification: 35Q41, 35A01.}
\begin{document}
\date{\today}

\maketitle
\begin{abstract}
We consider the Cauchy problem for the Chern-Simons-Dirac system on $\RR^{1+1}$ with initial data in $H^s$. Almost optimal local well-posedness is obtained. Moreover, we show that the solution is global in time, provided that initial data for the spinor component has finite charge, or $L^2$ norm.
\end{abstract}

\section{Introduction}
The Chern-Simons action was first studied from a geometric point of view in \cite{Chern1974}. Subsequently, it was proposed as an alternative gauge field theory to the standard Maxwell theory of electrodynamics on Minkowski space $\RR^{1+2}$ \cite{Deser1982}. As well as being of interest theoretically, it has also been successfully applied to explain phenomena in the physics of planar condensed matter, such as the fractional quantum Hall effect \cite{Lopez1991}. Recently, much progress has been made on the Cauchy problem for the Chern-Simons action coupled with various other field theories such as Chern-Simons-Higgs, \cite{Bournaveas2009a, Huh2007}, and Chern-Simons-Dirac \cite{Huh2007}. \\

In the current article we consider the Cauchy problem for the Chern-Simons-Dirac (CSD) system in $\RR^{1+1}$. This system was
first studied by Huh in \cite{Huh2010} as a simplified version of the more standard CSD system on $\RR^{1+2}$. The CSD system on $\RR^{1+1}$ is given by
\begin{equation}\label{CSD} \tag{CSD}
    \begin{split}
            i \gamma^\mu D_\mu \psi &= m \psi\\
            \p_t A_1 - \p_x A_0 &= \psi^\dagger \alpha \psi\\
            \p_t A_0 - \p_x A_1 &= 0
    \end{split}
\end{equation}
with initial data $\psi(0)= f$, $A(0) = a$, where the spinor $\psi$ is a $\CC^2$ valued function of $(t, x)=(x_0, x_1) \in \RR^{1+1}$ and the gauge components $A_0$ and $A_1$ of the gauge $A= (A_0, A_1)$
are real valued. The covariant derivative is given by $D_\mu = \p_\mu - i A_\mu$ and we raise and lower indices with respect to the metric $g = \text{diag} (1, -1)$. Repeated indices are summed over $\mu =0, 1$, and we use $\psi^\dagger$ to denote the conjugate transpose of $\psi$.
We take the standard representation of the Gamma matrices
            $$\gamma^0 = \begin{pmatrix} 1 & 0 \\ 0 & -1 \end{pmatrix}, \qquad \gamma^1 = \begin{pmatrix} 0 & 1 \\ -1 & 0 \end{pmatrix}$$
and let $\alpha= \gamma^0$.

The system (\ref{CSD}) is interesting from a mathematical point of view for a number of reasons. Firstly solutions to (\ref{CSD}) satisfy conservation of charge, i.e. we have $\| \psi(t) \|_{L^2} = \| f\|_{L^2}$ for any $t \in \RR$. This is similar to the Dirac-Klein-Gordon (DKG) equation where conservation of charge also holds. We remark that conservation of charge forms a crucial component in the study of global existence for DKG \cite{Selberg2007, Tesfahun2009}. On the other hand, conservation of charge fails for other quadratic Dirac equations which have been studied in the literature   \cite{Bournaveas2008,  Machihara2005, Machihara2007}. Secondly, there is substantial null structure in the nonlinear terms in (\ref{CSD}), in the sense that (\ref{CSD}) is roughly equivalent to a system of nonlinear  wave equations of the form
                $$ \Box \Psi = Q( \Psi, \Psi)$$
where $Q(\Psi, \Psi)$ is a combination of the null forms $Q_{ij} = \p_i \Psi_\mu \p_j \Psi_\nu - \p_j \Psi_\mu \p_i \Psi_\nu$ and $Q_0 = g^{\mu \nu} \p_\mu \Psi \p_\nu \Psi$.
Moreover the structure of the equation means that in the mass free case $m=0$, the spinor $\psi$ can be explicitly solved in terms of the initial data $\psi_0$ and the gauge $A$.  This idea was used in \cite{Huh2010} to derive a number of interesting observations on the asymptotic behaviour of solutions to (\ref{CSD}) as $t \rightarrow \infty$.\\

Currently the best known results for the Cauchy problem for (\ref{CSD}) are due to Huh in \cite{Huh2010} where it was shown that the (\ref{CSD}) system is locally well-posed for initial data in the charge class $(\psi_0, a_0)\in L^2 \times L^2$, and globally well-posed for $(\psi_0, a_0) \in H^1\times H^1$. To prove the local in time result, Huh rewrote (\ref{CSD}) as a system of nonlinear wave equations and showed that the nonlinear terms contained null structure. The null form estimates of Klainerman and Machedon \cite{Klainerman1993} then completed the proof.

In the current article we use a different approach. Instead of rewriting (\ref{CSD}) as a wave equation, we factor the Dirac and Gauge components into null-coordinates $x\pm t$ and use Sobolev spaces adapted to these coordinates.  In one space dimension, Sobolev spaces based on null coordinates seem to behave better than the closely related $X^{s, b}_\pm$ type spaces of Bourgain-Klainerman-Machedon which have been used in many other low-regularity results on Dirac equations in one dimension, see for instance the results in \cite{Candy2010, Machihara2010}.  Our main result is the following.

\begin{theorem}\label{thm - local existence}
   Let $\frac{-1}{2} < r \les s \les r+1$ and $(f, a) \in H^{s} \times H^{r}$. Then there exists $T>0$ and a solution $ (\psi, A) \in C\big( [-T, T], H^{s}\times H^r\big)$ to (\ref{CSD}). Moreover solution depends continuously on the initial data,  is unique in some subspace of $C\big( [-T, T], H^{s} \times H^r \big)$, and any additional regularity persists in time\footnote{ More precisely, if $(\psi_0, a_0) \in H^{s'} \times H^{r'}$ with $s' \g s$, $r' \g r$, and $r'\les s' \les r'+1$,  then we can conclude that $(\psi, A) \in C\big( [-T, T], H^{s'} \times H^{r'} \big)$, where $T$ only depends on the size of
              $ \| f \|_{H^{s}} + \| a\|_{H^{r}} $.}.
\end{theorem}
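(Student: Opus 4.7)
The plan is to diagonalise the Dirac and Maxwell-type subsystems so that each unknown solves a transport equation along one of the null directions $\p_t \pm \p_x$, and then to run a contraction mapping in Sobolev spaces adapted to these characteristics. Writing $\psi = (\psi_+, \psi_-)^T$ and introducing the new unknowns $u = \psi_+ + \psi_-$, $v = \psi_+ - \psi_-$, $A_\pm = A_0 \pm A_1$, a direct computation with the given gamma matrices recasts (\ref{CSD}) as
\begin{align*}
i(\p_t + \p_x) u + A_+ u &= m v, \\
i(\p_t - \p_x) v + A_- v &= m u, \\
(\p_t - \p_x) A_+ &= \mathrm{Re}(u \bar v), \\
(\p_t + \p_x) A_- &= -\mathrm{Re}(u \bar v),
\end{align*}
with initial data read off from $(f,a)$. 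Crucially, in every bilinear nonlinearity the two factors propagate along \emph{opposite} null lines (for instance $A_+$ is a left-mover while $u$ is a right-mover), and this transversality is the null structure exploited below.

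Next I would introduce resolution spaces based on the null coordinates $\alpha = t + x$ and $\beta = t - x$: a norm $X^s_+$ of $H^s$-type in the null variable $\beta$ for right-movers ($u$, $A_-$), and analogously $X^s_-$ for left-movers ($v$, $A_+$), together with the traces at $t = 0$. Rewriting each equation in Duhamel form along its characteristic, e.g.
\[
u(t,x) = f_+(x-t) - i \int_0^t \bigl(A_+ u - m v\bigr)(s, x-t+s)\,ds,
\]
and analogous formulae for $v, A_\pm$, the Cauchy problem becomes a fixed point for the Picard map on $X^s_+ \times X^s_- \times X^r_- \times X^r_+$. Boundedness and contraction reduce to a handful of bilinear estimates of the type
\[
\|A_+ u\|_{N^s_+} \les \|A_+\|_{X^r_-} \|u\|_{X^s_+}, \qquad \|u \bar v\|_{N^r_-} \les \|u\|_{X^s_+} \|v\|_{X^s_-},
\]
with $N^\cdot_\pm$ the natural norms dual to the $X^\cdot_\pm$ under the Duhamel parametrix along the corresponding characteristic.

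The hard part will be proving these bilinear estimates throughout the sharp range $\tfrac{-1}{2} < r \les s \les r+1$. The payoff of the null-coordinate framework is that in one spatial dimension the change of variables $(t,x) \mapsto (\alpha,\beta)$ turns a product of a right-mover and a left-mover into an essentially tensorised object, so a Fubini argument decouples the two factors and produces a gain of one full derivative over naive Sobolev product estimates. I expect the endpoint thresholds $r > -\tfrac{1}{2}$ and $s - r \les 1$ to fall out precisely as the limits at which these gains still close the bilinear estimates, after a dyadic frequency decomposition along each null direction and a careful case analysis based on which factor is at higher frequency. Once the contraction is established on a small ball with existence time depending only on $\|f\|_{H^s} + \|a\|_{H^r}$, uniqueness in the resolution space, continuous dependence and persistence of higher regularity follow from standard arguments based on differencing the Duhamel formulas and reusing the same bilinear estimates.
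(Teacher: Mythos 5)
Your reduction to null coordinates and the plan to contract via Duhamel along characteristics is exactly the skeleton of the paper's argument (up to sign conventions: the paper uses $u_\pm = \psi_1 \pm \psi_2$ and $A_\pm = A_0 \mp A_1$, so that all nonlinearities are $+\,-$ products). But there are two genuine gaps in your outline.

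First, the resolution space is underdetermined and, as sketched, probably insufficient. You propose a norm ``of $H^s$-type in the null variable $\beta$'' for right-movers; a one-parameter norm of this kind does not carry the information needed to close the bilinear estimates, nor does it embed into $C_tH^s_x$ near the scaling line. The paper uses the \emph{two}-parameter product Sobolev norm $\| u \|_{Z^{s,b}_\pm} = \| \al\tau\mp\xi\ar^s \al\tau\pm\xi\ar^b \widetilde{u}\|_{L^2_{\tau,\xi}}$ with $b>\tfrac12$, and observes that even this fails to embed into $C(\RR,H^s)$ when $s$ is close to $-\tfrac12$; an auxiliary space $Y^{s,0}_\pm$ with norm $\| \al\xi\ar^s\al\tau\pm\xi\ar^b\widetilde{u}\|_{L^2_\xi L^1_\tau}$ is added precisely to recover the $L^\infty_t H^s_x$ control. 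Your vague ``together with the traces at $t=0$'' does not substitute for this; without an explicit companion space the continuity claim in the conclusion is unproved.

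Second, and more seriously, you assert that uniqueness (and persistence) ``follow from standard arguments based on differencing the Duhamel formulas.'' This is exactly what does \emph{not} work here, and the paper says so explicitly: the contraction gives uniqueness only in a small ball of the rescaled problem, and the spaces $Z^{s,b}_\pm$ and $Y^{s,0}_\pm$ scale differently on $[-T,T]\times\RR$ (the $Y$ space behaves like $Z$ at the endpoint $b=\tfrac12$), so one cannot transfer ball-uniqueness back to uniqueness in the class $Z^{s,b}_\pm(S_T)$ by scaling. The paper needs a refined, $T$-dependent energy inequality (Proposition~\ref{prop - energy for Z with time dependence}) that tracks the blow-up factor $T^{\frac12-b}$ in front of the $Y^{s,0}_\pm$ term and compensates it with $T^\epsilon$ gains from the inhomogeneous terms by taking $b$ close to $\tfrac12$. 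Something of that kind must be supplied; it is not a routine difference estimate. Persistence of regularity likewise requires a bootstrap (iterating the local theory with the time of existence controlled at a fixed \emph{lower} regularity $H^r\times H^{r^*}$, $r^*>-\tfrac12$), not merely re-running the same bilinear estimates.
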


\begin{remark} If we set $m=0$, then solutions to (\ref{CSD}) are invariant under the scaling $(u, A) \mapsto \frac{1}{\lambda}(u, A)\big(\frac{t}{\lambda}, \frac{x}{\lambda} \big)$. Hence the scale invariant space is $\dot{H}^{-\frac{1}{2}}\times \dot{H}^{-\frac{1}{2}}$. Since we do not expect any well-posedness below the scaling regularity, the range of well-posedness in Theorem \ref{thm - local existence} is essentially optimal, except possibly at the endpoint $r=\frac{-1}{2}$. Moreover, it should be possible to show that (\ref{CSD}) is ill-posed in some sense outside of the range given in Theorem \ref{thm - local existence} by using the techniques in \cite{Machihara2010}, but we do not consider the problem of ill-posedness here.
\end{remark}

The local existence portion of Theorem \ref{thm - local existence} will follow by the standard iteration argument, using estimates contained in \cite{Machihara2010}. The proof of uniqueness is  more difficult and does not follow directly from the existence proof, primarily because the spaces used to prove existence do not scale nicely on the domain $[-T, T]\times \RR$. Instead we will need to prove a more precise version of an energy inequality from \cite{Machihara2010}. See Proposition \ref{prop - energy for Z with time dependence} below. Finally the persistence of regularity is quite interesting as it allows both the regularity of the spinor, $\psi$, and the gauge, $A$, to be varied independently, provided that we remain in the region of well-posedness.\\

We now turn to the question of global well-posedness. In the case $s\g0$ we can exploit the conservation of charge together with a decomposition argument from \cite{Candy2010} to obtain the following.

\begin{corollary}\label{cor - global existence}
  Assume that $s \g0$ in Theorem \ref{thm - local existence}. Then the local solution can be extended to a global solution $ (\psi, A) \in C\big( \RR, H^{s}\times H^r \big)$.
\end{corollary}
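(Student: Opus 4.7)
The plan is to iterate Theorem~\ref{thm - local existence}. Since the local lifespan it provides depends only on $\|f\|_{H^s}+\|a\|_{H^r}$, the corollary reduces to an a~priori bound on $\|\psi(t)\|_{H^s}+\|A(t)\|_{H^r}$ over every compact time interval. Conservation of charge gives $\|\psi(t)\|_{L^2}=\|f\|_{L^2}$, which handles the $L^2$ piece of the spinor for free once $s\geq 0$; the remaining task is to bootstrap this to the full regularity $H^s\times H^r$.

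The decomposition idea from \cite{Candy2010} would be implemented as follows. Fix a frequency threshold $N$ and split
\[
f = P_{\leq N}f + P_{>N}f,\qquad a = P_{\leq N}a + P_{>N}a.
\]
The low-frequency data lies in $H^1\times H^1$, so by Huh's global well-posedness result \cite{Huh2010}, combined with the persistence of regularity in Theorem~\ref{thm - local existence}, it evolves into a global solution $(\psi^{\rm lo},A^{\rm lo})\in C(\RR,H^s\times H^r)$ whose norm on any compact interval is controlled by a continuous function of $N$ and the initial data. The high-frequency remainder is small in the relevant norms, $\|P_{>N}f\|_{L^2}+\|P_{>N}a\|_{H^r}\to 0$ as $N\to\infty$, so can be made arbitrarily small by choosing $N$ large.

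Writing $(\psi,A)=(\psi^{\rm lo}+\phi,\,A^{\rm lo}+B)$, the perturbation $(\phi,B)$ starts from small $L^2\times H^r$ data and satisfies a CSD-type system linear in $(\phi,B)$ up to a quadratic self-interaction, with coefficients drawn from the smooth background. Running the fixed-point argument behind Theorem~\ref{thm - local existence} for this perturbation system should give a lifespan for $(\phi,B)$ depending only on the size of $(\psi^{\rm lo},A^{\rm lo})$ on the current time interval. Charge conservation for the full solution, in the form $\|\phi(t)+\psi^{\rm lo}(t)\|_{L^2}=\|f\|_{L^2}$, keeps $\|\phi(t)\|_{L^2}$ bounded throughout the existence time, and iterating the perturbative step finitely many times covers any $[0,T]$.

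The main obstacle is making the perturbative lifespan genuinely uniform in time: the null-frame spaces behind Theorem~\ref{thm - local existence} do not scale cleanly under restriction to subintervals, so the bilinear estimates must be recast in a time-aware form. This is essentially the same technical point that motivates the refined energy inequality alluded to in the discussion preceding Theorem~\ref{thm - local existence}, so one expects the same machinery to close the iteration.
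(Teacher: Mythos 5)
Your frequency decomposition $f=P_{\leq N}f+P_{>N}f$ is a genuinely different route from the paper's, which follows \cite{Candy2010} in splitting the \emph{spinor} into a mass-free piece $u^L_\pm$ (solving $i(\partial_t\pm\partial_x)u^L_\pm=-A_\mp u^L_\pm$ with $u^L_\pm(0)=f_\pm$) and a remainder $u^N_\pm$ with zero data sourced only by the mass term. That Delgado-type decomposition is not a high/low split: since $A_\pm$ is real one gets the pointwise identity $|u^L_\pm(t,x)|=|f_\pm(x\mp t)|$, while $u^N_\pm$ is bounded in $L^\infty_x$ via Duhamel. The crucial payoff is the transversal bilinear estimate
\[
\int_{|s|<T}\|u^L_+(s)\,\overline{u}^L_-(s)\|_{L^2_x}\,ds\;\lesssim_T\;\|f_+\|_{L^2}\|f_-\|_{L^2},
\]
which is just a change of variables $(x,s)\mapsto(x-s,x+s)$, and which converts $L^2$ control of the spinor into the $L^2$-in-space control of the source term needed to bound the gauge.

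This points to the genuine gap in your argument: you only address the spinor. Charge conservation keeps $\|\phi(t)\|_{L^2}$ bounded, but you never explain how to bound $\|B(t)\|_{H^r}$ (equivalently $\|A(t)\|_{H^r}$), and without that the rescaled local theorem does not give a lifespan bounded below, so ``iterating finitely many times'' does not close. The source for $B_\pm$ contains the quadratic self-interaction $\Re(\phi_+\overline{\phi}_-)$, and an $L^2$ bound on $\phi_\pm$ only places this in $L^1_x$, which fails to embed into $H^r$ for any $r>-\tfrac12$. Moving low frequencies into a smooth background handles the cross terms $\phi\,\overline{u}^{\mathrm{lo}}$, but leaves the $\phi\overline{\phi}$ term exactly as hard as the original problem; the high/low split does not create the transversal modulus-preserving structure that the Delgado split exploits. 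Your closing remark also misdiagnoses the difficulty: the failure of the null-frame spaces to scale nicely on $[-T,T]\times\RR$ is what drives the refined energy inequality in the \emph{uniqueness} argument (Proposition \ref{prop - energy for Z with time dependence}), not the global continuation, whose bottleneck is precisely the missing a~priori bound on the gauge.
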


\begin{figure}
\setlength{\unitlength}{0.254mm}
\begin{picture}(382,236)(285,-245)
        \allinethickness{0.254mm}\path(320,-165)(320,-170) 
        \allinethickness{0.254mm}\path(320,-160)(320,-165) 
        \allinethickness{0.254mm}\path(485,-165)(485,-170) 
        \allinethickness{0.254mm}\path(485,-165)(485,-160) 
        \put(305,-186){\shortstack{$-\frac{1}{2}$}} 
        \put(485,-186){\shortstack{$1$}} 
        \put(650,-181){\shortstack{$s$}} 
        \put(360,-21){\shortstack{$r$}} 
        \put(485,-41){\shortstack{$s=r$}} 
        \put(595,-41){\shortstack{$s=r+1$}} 
        \allinethickness{0mm}\special{sh 0.1}\path(375,-165)(375,-220)(430,-220)(605,-45)(495,-45)(375,-165) 
        \allinethickness{0.254mm}\path(605,-45)(430,-220) 
        \allinethickness{0.254mm}\path(495,-45)(320,-220) 
        \allinethickness{0.254mm}\dottedline{5}(320,-220)(430,-220) 
        \allinethickness{0.254mm}\path(485,-165)(485,-160) 
        \allinethickness{0.254mm}\put(375,-245){\vector(0,1){235}} 
        \allinethickness{0.254mm}\put(285,-165){\vector(1,0){375}} 
\end{picture}

\caption{The domain of local/global well-posedness from Theorem \ref{thm - local existence} and Corollary \ref{cor - global existence}. We have local existence inside the lines $s=r$ and $s=r+1$ for $r>-\frac{1}{2}$. Global existence holds inside the shaded region. }
\end{figure}
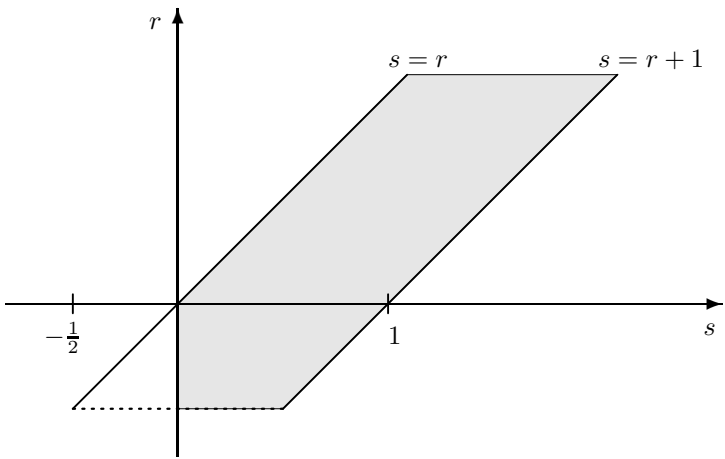

We now give a brief outline of this article. In Section \ref{sec estimates} we gather together the estimates we require in the proof of Theorem \ref{thm - local existence}. The local existence component of Theorem \ref{thm - local existence} is proven in Section \ref{sec local existence}. The proof of uniqueness is contained in Section \ref{sec uniqueness}. Finally in Section \ref{sec global existence} we prove Corollary \ref{cor - global existence}.

\subsection*{Notation}
Throughout this paper $C$ denotes a positive constant which can vary from line to line. The notation $a \lesa b $ denotes the inequality $a \les C b$.
 We let $L^p(\RR^n)$ denote the usual Lebesgue space.  Occasionally we write $L^p(\RR^n) = L^p$ when we can do so without causing confusion. This comment also applies to the other function spaces which appear throughout this paper. If $X$ is a metric space and $I\subset \RR $ is an interval,  then  $C(I, X)$ denotes the set of continuous functions from $I$ into $X$. For $s \in \RR$, we define $H^s$ to be the usual Sobolev space defined using the norm
            $$ \| f \|_{H^s (\RR)} = \| \al \xi \ar^s \widehat{f} \|_{L^2(\RR)}$$
 where  $\widehat{f}$ denotes the Fourier transform of $f$ and  $\al \xi \ar = (1+ |\xi|^2)^{\frac{1}{2}}$. The space-time Fourier transform of a function $\psi(t, x)$ is denoted by $\widetilde{\psi}(\tau, \xi)$. We also use the notation $\mathcal{F}_y(f)$ to denote the Fourier transform of $f$ with respect to the variable $y$.

 If $X$ is a Banach space of functions defined on $\RR^n$, then for an open set $\Omega \subset \RR^n$ we define the restriction space $X(\Omega)$ by restricting elements of $X$ to $\Omega$. If we equip $X(\Omega)$ with the norm
            $$ \| f \|_{X(\Omega)} = \inf_{g=f \text{ on } \Omega} \| g\|_{X}$$
 then $X(\Omega)$ is also a Banach space. Finally, for $a, b, c \in \RR$ we use the notation $c \prec \{ a, b\}$ to denote that either
            $$ a+b \g 0, \qquad c\les \min\{a, b\}, \qquad c< a+b -\frac{1}{2}$$
 or
             $$ a+b > 0, \qquad c< \min\{a, b\}, \qquad c\les a+b -\frac{1}{2}$$
 holds. Note that $c \prec \{ a, b\}$ implies that the following product inequality for Sobolev spaces holds
            $$\| f g\|_{H^c(\RR)} \lesa \| f\|_{H^a(\RR)} \| g\|_{H^b(\RR)}.$$

\section{ Estimates}\label{sec estimates}

The main estimates we require in the proof of Theorem \ref{thm - local existence} have already been proven in \cite{Machihara2010}. Define
            $$ \| u \|_{ Z^{s, b}_\pm} = \big\| \al \tau \mp \xi \ar^s \al \tau \pm \xi \ar^b \widetilde{\psi}(\tau, \xi) \big\|_{L^2_{\tau , \xi}}.$$
Note that $Z_\pm^{s, b} $ is just the product Sobolev space in the null directions $x\pm t$. The  $Z^{s, b}_\pm$ space is enough to control the nonlinear terms in (\ref{CSD}). However for $s$ close to $\frac{-1}{2}$, the space $Z_\pm^{s, b}$ is not contained inside $C(\RR, H^s(\RR))$. Thus to prove the local well-posedness result in Theorem \ref{thm - local existence}, we will need to add a component to control the $L^\infty_t H^s$ norm. To this end, following \cite{Machihara2010}, we define the space $Y^{s, b}_\pm$ by using the norm
            $$ \| u \|_{Y^{s, b}_{\pm} } = \big\| \al \xi \ar^s \al \tau \pm \xi \ar^b \widetilde{u}(\tau, \xi) \big\|_{L^2_\xi L^1_\tau}.$$
It is easy to see that
            $$ \| u \|_{L^\infty_t H^s_x} \les \| u \|_{Y^{s, 0}_{\pm}}.$$
and so $Z^{s, b}_\pm \bigcap Y^{s,0}_\pm \subset C(\RR, H^s(\RR)$. We remark that spaces of the form $Y^{s, b}_\pm$ have been used previously to augment the standard $X^{s, b}$ spaces for $b=\frac{1}{2}$ in the periodic case in \cite{Bourgain1993}, see also \cite{Ginibre1997}.

The first result we will need is the following energy type inequality.

\begin{lemma}[\cite{Machihara2010} Lemma 3.2] \label{lem - energy est}
Let $s, b \in \RR$ and $S=[-1, 1]\times \RR$. Suppose $u$ is a solution to
            \begin{align*} \p_t u \pm \p_x u &= F \\
                                          u(0) &= f \end{align*}
on $S$. Then
        \begin{equation}\label{lem - energy est - eqn main}
                \| u \|_{Z^{s, b}_\pm(S)} + \| u \|_{Y^{s, 0}_\pm(S)} \lesa \| f\|_{H^s} + \inf_{F'|_S = F }\Big( \| F' \|_{Z^{s, b-1}_\pm} + \| F' \|_{Y^{s, -1}_\pm} \Big)
        \end{equation}
where the infimum is over all  $F' \in Z^{s, b-1}_\pm \cap Y^{s, -1}_\pm $ with $F' = F$ on $S$.

\end{lemma}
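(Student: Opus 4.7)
The estimate is a standard Duhamel-type energy bound for the transport operator $\p_t \pm \p_x$ in Bourgain-type spaces adapted to null coordinates. My plan is to reduce to a global problem on $\RR^{1+1}$ and then treat the free evolution and the Duhamel integral separately on the Fourier side.

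To set up, fix a near-optimal extension $F'$ of $F$ whose $Z^{s,b-1}_\pm \cap Y^{s,-1}_\pm$ norm is within a factor of $2$ of the infimum, and let $\tilde u$ be the global solution of $\p_t \tilde u \pm \p_x \tilde u = F'$ with $\tilde u(0)=f$. By uniqueness for the transport equation, $\tilde u = u$ on $S$, so for any cutoff $\chi \in C_c^\infty(\RR)$ with $\chi \equiv 1$ on $[-1,1]$ the product $\chi\tilde u$ is an admissible extension of $u$ from $S$, and it suffices to establish the global bound
\begin{equation*}
\|\chi\tilde u\|_{Z^{s,b}_\pm} + \|\chi\tilde u\|_{Y^{s,0}_\pm} \lesa \|f\|_{H^s} + \|F'\|_{Z^{s,b-1}_\pm} + \|F'\|_{Y^{s,-1}_\pm}.
\end{equation*}
I would then write $\chi\tilde u = u_0 + u_1$, where $u_0 = \chi(t)e^{\mp t\p_x}f$ is the free piece and $u_1 = \chi(t)\int_0^t e^{\mp(t-s)\p_x} F'(s,\cdot)\,ds$ is the Duhamel piece.

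The free part is immediate: a direct Fourier computation gives $\widetilde{u_0}(\tau,\xi) = \hat\chi(\tau\pm\xi)\hat f(\xi)$, and since $\hat\chi$ is Schwartz the factor $\hat\chi(\tau\pm\xi)$ absorbs every power of $\al\tau\pm\xi\ar$, while Peetre's inequality $\al\tau\mp\xi\ar^s \lesa \al\tau\pm\xi\ar^{|s|}\al\xi\ar^s$ converts the remaining null weight into $\al\xi\ar^s$. Both $\|u_0\|_{Z^{s,b}_\pm}$ and $\|u_0\|_{Y^{s,0}_\pm}$ are thus bounded by $\|f\|_{H^s}$. For the Duhamel piece, using $\int_0^t e^{is(\tau\pm\xi)}\,ds = (e^{it(\tau\pm\xi)}-1)/(i(\tau\pm\xi))$ and then taking the space-time Fourier transform yields
\begin{equation*}
\widetilde{u_1}(\tau',\xi) = \int \widetilde{F'}(\tau,\xi)\,\frac{\hat\chi(\tau'-\tau) - \hat\chi(\tau'\pm\xi)}{i(\tau\pm\xi)}\,d\tau.
\end{equation*}
I would then decompose $F' = F'_{hi}+F'_{lo}$ according to whether $|\tau\pm\xi|\g 1$ or $<1$. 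On the high-modulation piece the factor $(\tau\pm\xi)^{-1}$ accounts exactly for the power of $\al\tau\pm\xi\ar$ gained in passing from $Z^{s,b-1}_\pm$ to $Z^{s,b}_\pm$, and the two Schwartz factors $\hat\chi(\tau'-\tau)$, $\hat\chi(\tau'\pm\xi)$ give kernels integrable in $\tau'$, yielding both norms on the left. On the low-modulation piece, the difference $\hat\chi(\tau'-\tau) - \hat\chi(\tau'\pm\xi)$ vanishes at $\tau = \mp\xi$, so after a Taylor expansion of $\hat\chi$ the singular denominator is cancelled, and the remaining smooth kernel is handled by Schur's test.

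The main obstacle is this last low-modulation bound. When $|\tau\pm\xi|<1$ both weights $\al\tau\pm\xi\ar^{b-1}$ and $\al\tau\pm\xi\ar^b$ are of size $1$, so the $Z^{s,b-1}_\pm$ norm alone offers no gain to kill the singularity at $\tau\pm\xi=0$; it is precisely here that the auxiliary norm $Y^{s,-1}_\pm$ on the right hand side (and the matching $Y^{s,0}_\pm$ norm on the left) becomes essential, since the $L^2_\xi L^1_\tau$ structure of these spaces is exactly what makes the Schur-type estimate close.
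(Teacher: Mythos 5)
The paper does not prove this lemma; it is imported verbatim from Machihara--Nakanishi--Tsugawa \cite{Machihara2010} (their Lemma 3.2), so there is no in-paper argument against which to compare your attempt. Your Fourier-side Duhamel sketch is the natural route and is, in its main lines, correct: the identity $\widetilde{u_1}(\tau',\xi) = \int \widetilde{F'}(\tau,\xi)\,\frac{\hat\chi(\tau'-\tau)-\hat\chi(\tau'\pm\xi)}{i(\tau\pm\xi)}\,d\tau$ is right, the free part is immediate, and the difference-quotient cancellation at $\tau=\mp\xi$ is the correct observation for taming the denominator.

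One point in your discussion is misattributed, though it does not break the proof. You claim that the $Y^{s,-1}_\pm$ norm on the right is ``precisely'' what is needed in the low-modulation region $|\tau\pm\xi|<1$ to kill the singularity at $\tau\pm\xi=0$. In fact the cancellation you already invoke kills the singularity, and on that region $\al\tau\mp\xi\ar\approx\al\xi\ar$ and $\al\tau\pm\xi\ar^{b-1}\approx 1$, so Cauchy--Schwarz in $\tau$ over the bounded strip bounds that contribution directly by $\|F'\|_{Z^{s,b-1}_\pm}$ --- no $Y$ norm is required there. Where the $Y^{s,-1}_\pm$ norm on the right genuinely cannot be avoided is elsewhere: (i) in bounding the $Y^{s,0}_\pm$ norm on the left, because the $L^2_\xi L^1_{\tau'}$ structure together with Peetre forces the factor $\al\xi\ar^s$ rather than $\al\tau\mp\xi\ar^s$, and these two are not comparable on the high-modulation set $|\tau\pm\xi|\g 1$; and (ii) in bounding the $\hat\chi(\tau'\pm\xi)$ contribution to the $Z^{s,b}_\pm$ norm when $b\les\tfrac12$, since then $\al\tau\pm\xi\ar^{-b}\notin L^2_\tau(|\tau\pm\xi|\g 1)$ and the Cauchy--Schwarz step needed to pass to $\|F'\|_{Z^{s,b-1}_\pm}$ fails. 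Since the lemma is stated for arbitrary $b\in\RR$, point (ii) is unavoidable, and point (i) is unavoidable for any $b$. With this correction of emphasis, your outline carries through.
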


The previous energy inequality is sufficient to prove existence of solutions to (\ref{CSD}), however to obtain uniqueness we will require a slightly more refined version of Lemma \ref{lem - energy est} which we leave to Section \ref{sec uniqueness}.

To close the iteration argument we will need the following nonlinear estimate contained in \cite{Machihara2010}.

\begin{lemma}[\cite{Machihara2010}, Lemma 3.4]\label{lem - Y nonlinear est}
Let $s_1, s_2, b_1, b_2, s  \in \RR$ and assume there exists $a_0, b_0 \in \RR$ such that
       \begin{equation} \begin{split}  a_0 \prec \{s_1, b_2\}, \qquad b_0 \prec \{ s_2, b_1\},  \qquad s \prec\{a_0 , b_0 +1 \} \\
                            s_1 + b_1 > \frac{-1}{2}, \qquad s_2 + b_2 >  \frac{-1}{2}. \end{split} \end{equation}
Then we have
        $$ \| uv \|_{Y^{s, -1}_\pm} \lesa \| u\|_{Z^{s_1, b_1}_\pm} \| v \|_{Z^{s_2, b_2}_\mp}.$$
\end{lemma}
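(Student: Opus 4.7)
I would first pass to null coordinates $y_\pm = x \pm t$, with dual variables $\eta_\pm = (\tau \pm \xi)/2$. Under this change of variables $|\tau \mp \xi| \asymp |\eta_\mp|$ and $|\xi| = |\eta_+ + \eta_-|$, so, writing $U(y_+, y_-) = u(\tfrac{y_+-y_-}{2},\tfrac{y_++y_-}{2})$ (and similarly $V$, $W$), the input spaces become
$$\|u\|_{Z^{s_1, b_1}_+} \asymp \|\langle\eta_+\rangle^{b_1}\langle\eta_-\rangle^{s_1}\widehat{U}\|_{L^2_{\eta_+,\eta_-}},\qquad \|v\|_{Z^{s_2, b_2}_-} \asymp \|\langle\eta_+\rangle^{s_2}\langle\eta_-\rangle^{b_2}\widehat{V}\|_{L^2_{\eta_+,\eta_-}},$$
and, after parameterising the line $\{\tau+\xi=\text{const}\}$ by $\eta_+$, the target norm becomes
$$\|w\|_{Y^{s,-1}_+} \asymp \Bigl\|\langle\xi\rangle^s \int \langle\eta_+\rangle^{-1}|\widehat{W}(\eta_+, \xi-\eta_+)|\,d\eta_+\Bigr\|_{L^2_\xi}.$$

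The proof then splits into a linear and a bilinear step. For the linear step, I claim that $\|w\|_{Y^{s,-1}_+} \lesa \|w\|_{Z^{a_0, b_0}_+}$ whenever $s\prec\{a_0,b_0+1\}$. This follows from Cauchy--Schwarz in $\eta_+$ at fixed $\xi$, after splitting
$$\langle\eta_+\rangle^{-1} = \bigl(\langle\eta_+\rangle^{-(1+b_0)}\langle\xi-\eta_+\rangle^{-a_0}\bigr)\cdot \bigl(\langle\eta_+\rangle^{b_0}\langle\xi-\eta_+\rangle^{a_0}\bigr).$$
The resulting deterministic integral is controlled by the standard convolution estimate
$$\int \langle\alpha\rangle^{-p}\langle\beta-\alpha\rangle^{-q}\,d\alpha \lesa \langle\beta\rangle^{-\min(p,q,p+q-1)},$$
which with $p=2(1+b_0)$, $q=2a_0$ delivers exactly the power $\langle\xi\rangle^{-2s}$ required, the strict vs. non-strict cases of the $\prec$ relation matching the two logarithmic endpoints. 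Integrating in $\xi$ and changing variables $\eta_- = \xi-\eta_+$ reconstitutes $\|w\|_{Z^{a_0, b_0}_+}$.

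For the bilinear step it remains to show $\|uv\|_{Z^{a_0, b_0}_+}\lesa \|u\|_{Z^{s_1,b_1}_+}\|v\|_{Z^{s_2,b_2}_-}$ under $a_0 \prec \{s_1, b_2\}$, $b_0 \prec \{b_1, s_2\}$, $s_i + b_i > -1/2$. In null Fourier variables $\widehat{UV}$ is the full 2D convolution of $\widehat{U}$ and $\widehat{V}$, and crucially the output weight $\langle\eta_+\rangle^{b_0}\langle\eta_-\rangle^{a_0}$ and all input weights factor across $\eta_+$ and $\eta_-$. A Schur test with kernels
$$K_+(\eta_+,\mu_+) = \frac{\langle\eta_+\rangle^{b_0}}{\langle\mu_+\rangle^{b_1}\langle\eta_+-\mu_+\rangle^{s_2}},\qquad K_-(\eta_-,\mu_-) = \frac{\langle\eta_-\rangle^{a_0}}{\langle\mu_-\rangle^{s_1}\langle\eta_--\mu_-\rangle^{b_2}}$$
reduces the estimate to the one-dimensional bounds $\sup_{\eta_\pm}\|K_\pm\|_{L^2_{\mu_\pm}}\lesa 1$, which are precisely the squared forms of $b_0 \prec \{b_1, s_2\}$ and $a_0 \prec \{s_1, b_2\}$ (again via the same convolution estimate as above). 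The auxiliary hypotheses $s_i + b_i > -1/2$ are what guarantee absolute convergence of the kernel integrals when Sobolev indices are negative.

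The main obstacle will be the bilinear product step when several of $s_1, b_1, s_2, b_2$ are negative: in that regime a single Cauchy--Schwarz/Schur pass can fail at critical endpoints, and one must refine it via a dyadic Littlewood--Paley decomposition in each of $\eta_+, \eta_-$ and treat the high-low, high-high, and low-high interactions separately. The strict/non-strict distinctions built into the $\prec$ relation are exactly what ensure the resulting dyadic sums converge at the endpoints; combining this with the linear bound with $w=uv$ completes the proof.
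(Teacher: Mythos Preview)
The paper does not prove this lemma; it is quoted from \cite{Machihara2010} without argument, so there is no in-paper proof to compare against. That said, your two-step strategy---a linear embedding $\|w\|_{Y^{s,-1}_\pm}\lesa\|w\|_{Z^{a_0,b_0}_\pm}$ followed by the bilinear $Z$-product bound---is natural, and your bilinear step is exactly the paper's Lemma~\ref{lem - Z product estimate} (which likewise is stated without proof, as the tensor product of two one-dimensional Sobolev product inequalities in the null variables). So the factorisation you propose is consistent with how the paper organises the estimates.

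One point deserves care. Your Cauchy--Schwarz argument for the linear embedding requires
\[
\sup_\xi\ \al\xi\ar^{2s}\int \al\tau+\xi\ar^{-2(1+b_0)}\al\tau-\xi\ar^{-2a_0}\,d\tau<\infty,
\]
and for the $\tau$-integral even to converge at infinity one needs $a_0+b_0>-\tfrac12$. The hypothesis $s\prec\{a_0,b_0+1\}$ only forces $a_0+b_0\ge-1$, so as written there is a gap in that range. Relatedly, your account of where the conditions $s_i+b_i>-\tfrac12$ enter is vague: these constrain the \emph{input} indices, not $a_0,b_0$, and the bilinear step (Lemma~\ref{lem - Z product estimate}) does not use them at all, while your linear step never sees $s_i,b_i$. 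Either your route makes those two hypotheses redundant---in which case you should state and verify that explicitly---or they are what allows one to adjust $a_0,b_0$ so that the Cauchy--Schwarz integral converges, and you should identify precisely how. Without that, the argument is incomplete at the stated level of generality.
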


We also have the following well known product estimates for Sobolev spaces.

\begin{lemma}\label{lem - Z product estimate}
  Assume $s \prec \{s_1, b_2\}$ and $b \prec \{b_1, s_2\}$. Then
            $$ \| u v \|_{Z^{s, b}_{\pm} } \lesa \| u \|_{Z^{s_1, b_1}_{\pm}} \| v\|_{Z^{s_2, b_2}_\mp }.$$
\end{lemma}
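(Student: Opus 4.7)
The plan is to exploit the fact that $Z^{s,b}_\pm$ is, up to a linear change of variables, a product Sobolev space in the null directions $y_\pm = x \pm t$. Letting $U(y_+,y_-) = u\big((y_+-y_-)/2,\,(y_++y_-)/2\big)$, a direct Fourier computation shows that $\tau \pm \xi$ is, up to a factor of $2$ and a sign, the dual variable of $y_\mp$. Hence
\[
 \|u\|_{Z^{s,b}_+} \sim \big\|\langle\eta_-\rangle^s\langle\eta_+\rangle^b\, \widehat U(\eta_+,\eta_-)\big\|_{L^2_{\eta_+,\eta_-}},
\]
with the roles of $\eta_+$ and $\eta_-$ interchanged for the $Z^{s,b}_-$ norm. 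In particular, $Z^{s,b}_+$ corresponds to the anisotropic Sobolev space $H^s_{y_-}H^b_{y_+}$ while $Z^{s,b}_-$ corresponds to $H^s_{y_+}H^b_{y_-}$.

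Under this identification, taking $\pm = +$, the product $uv$ with $u\in Z^{s_1,b_1}_+$ and $v\in Z^{s_2,b_2}_-$ corresponds to $UV$, where $U$ has regularities $(s_1,b_1)$ in $(y_-,y_+)$ and $V$ has regularities $(b_2,s_2)$ in $(y_-,y_+)$. The claim reduces to showing $UV$ has regularities $(s,b)$ in $(y_-,y_+)$, which decouples into two independent one-dimensional product estimates: in the $y_-$ variable we need $s \prec \{s_1,b_2\}$ and in the $y_+$ variable we need $b \prec \{b_1,s_2\}$. These are precisely the hypotheses. The underlying 1D Sobolev product inequality $\|fg\|_{H^c}\lesa \|f\|_{H^a}\|g\|_{H^b}$ for $c\prec\{a,b\}$ is already recorded in the Notation section.

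To combine the two 1D estimates into a single 2D anisotropic one, I would perform a simultaneous Littlewood--Paley decomposition of $U$ and $V$ in the two independent frequency variables $\eta_+$ and $\eta_-$, and then apply the standard low/high trichotomy separately in each direction. Because the two frequency variables are independent, the trichotomy in $\eta_+$ does not interact with that in $\eta_-$, and the dyadic sum closes using the $\prec$-constraint in the corresponding direction. The $Z^{s,b}_-$ case follows from the same argument with the roles of $y_+$ and $y_-$ swapped. The only substantive point in the argument is this bookkeeping of the two independent dyadic decompositions, which is routine; the passage to null coordinates is purely formal.
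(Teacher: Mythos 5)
Your proposal is correct and coincides with the reasoning the paper itself invokes (but does not spell out): the paper explicitly remarks that $Z^{s,b}_\pm$ is ``just the product Sobolev space in the null directions $x\pm t$'' and then states the lemma as a ``well known'' product estimate, which is precisely the anisotropic tensoring of two one-dimensional Sobolev product inequalities that you describe. The reduction to null coordinates and the appeal to the 1D product inequality $\|fg\|_{H^c}\lesa\|f\|_{H^a}\|g\|_{H^b}$ for $c\prec\{a,b\}$ (recorded in the Notation section) are both sound; the tensoring works because the Fourier-side weight factorizes as a product over $\eta_+$ and $\eta_-$, so each $\prec$-condition closes its own dyadic sum independently, exactly as you indicate.
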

Finally we will need the following Lemma which will help simplify the arguments leading to uniqueness.

\begin{lemma}\label{lem - time dependence of Sobolev norms}
Let $\frac{-1}{2}<s<\frac{1}{2}$ and $0<T<1$. Assume $\rho \in H^1$ and let $\rho_T(t) = \rho\big( \frac{t}{T} \big)$. Then
    \begin{equation}\label{lem - time dependence of Sobolev norms - main eqn 1} \| \rho_T(t) f(t) \|_{H^s_t} \lesa_{\rho} \| f\|_{H^s} \end{equation}
with constant independent of $T$. Consequently
        \begin{equation}\label{lem - time dependence of Sobolev norms - main eqn 2} \| \rho_T(t) u \|_{Z^{s, 0}_\pm} \lesa_\rho \| u \|_{Z^{s, 0}_\pm} \end{equation}
with constant independent of $T$.
\begin{proof}
The inequality (\ref{lem - time dependence of Sobolev norms - main eqn 1}) is well-known. For the convenience of the reader we
sketch the proof in the  appendix.  To prove (\ref{lem - time dependence of Sobolev norms - main eqn 2})  we use a change of variables
        \begin{align*} \| \rho_T(t) \psi \|_{Z^{s, 0}_\pm} &= \Big\| \al \tau \mp \xi \ar^s \int \widehat{\rho_T}(\lambda) \widetilde{\psi}(\tau- \lambda, \xi) d\lambda \Big\|_{L^2_{\tau, \xi}} \\
        &= \Big\| \al \tau  \ar^s \int \widehat{\rho_T}(\lambda) \widetilde{\psi}(\tau \pm \xi- \lambda, \xi) d\lambda \Big\|_{L^2_{\tau, \xi}}
    \end{align*}
and then apply (\ref{lem - time dependence of Sobolev norms - main eqn 1}).
\end{proof}
\end{lemma}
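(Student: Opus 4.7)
The plan is to prove (\ref{lem - time dependence of Sobolev norms - main eqn 1}) as a scale-invariant multiplication estimate in $H^s_t$ for $|s|<\tfrac12$, and then deduce (\ref{lem - time dependence of Sobolev norms - main eqn 2}) by taking the Fourier transform in $t$ and reducing to (\ref{lem - time dependence of Sobolev norms - main eqn 1}) fibrewise in the spatial frequency.

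For (\ref{lem - time dependence of Sobolev norms - main eqn 1}), I would first reduce to the range $0\les s<\tfrac12$ by duality. The case $s=0$ is immediate from $\|\rho_T\|_{L^\infty}=\|\rho\|_{L^\infty}\lesa\|\rho\|_{H^1}$. For $0<s<\tfrac12$ I would use the Gagliardo--Slobodeckij characterization
\[\|g\|_{H^s}^2\approx\|g\|_{L^2}^2+\int\!\!\int\frac{|g(t)-g(t')|^2}{|t-t'|^{1+2s}}\,dt\,dt'\]
together with the triangle bound
\[|(\rho_T f)(t)-(\rho_T f)(t')|^2\lesa\|\rho\|_{L^\infty}^2\,|f(t)-f(t')|^2+|\rho_T(t)-\rho_T(t')|^2\,|f(t')|^2.\]
The first term contributes $\|\rho\|_{L^\infty}^2\|f\|_{H^s}^2$ directly. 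For the second, a change of variables $u=t/T$, $u'=t'/T$ and Fubini turn it into
\[T^{-2s}\int h(v/T)\,|f(v)|^2\,dv,\qquad h(u')=\int\frac{|\rho(u)-\rho(u')|^2}{|u-u'|^{1+2s}}\,du.\]
The key point is that this $T^{-2s}$ loss is offset by combining H\"older with the Sobolev embedding $H^s(\RR)\hookrightarrow L^{2/(1-2s)}(\RR)$: with $p=\tfrac{1}{2s}$ and $p'=\tfrac{1}{1-2s}$,
\[\int h(v/T)\,|f(v)|^2\,dv\lesa\|h(\cdot/T)\|_{L^p}\,\|f\|_{L^{2p'}}^2=T^{2s}\,\|h\|_{L^{1/(2s)}}\,\|f\|_{H^s}^2,\]
and the powers of $T$ cancel. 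To verify $h\in L^{1/(2s)}$, the one-dimensional embedding $H^1\hookrightarrow C^{1/2}$ shows $h$ is locally bounded, while $\rho\in L^2$ forces $h(u')=O(|u'|^{-1-2s})$ as $|u'|\to\infty$; hence $h\in L^p$ for every $p>\tfrac{1}{1+2s}$, and in particular for $p=\tfrac{1}{2s}$.

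For (\ref{lem - time dependence of Sobolev norms - main eqn 2}) I would write $\rho_T u$ as a convolution in $\tau$ with $\widehat{\rho_T}$ and make the measure-preserving substitution $\tau\mapsto\tau\pm\xi$ inside the $L^2_{\tau,\xi}$ norm. For each fixed $\xi$, the resulting inner integral is precisely the $H^s_t$-norm of the product of $\rho_T$ with the function whose Fourier transform in $t$ is $\tau\mapsto\widetilde u(\tau\pm\xi,\xi)$; applying (\ref{lem - time dependence of Sobolev norms - main eqn 1}) fibrewise and integrating in $\xi$ then closes the estimate.

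The main obstacle is the scale-invariant multiplication estimate (\ref{lem - time dependence of Sobolev norms - main eqn 1}) itself: a direct product-rule estimate using $\|\rho_T'\|_{L^\infty}=T^{-1}\|\rho'\|_{L^\infty}$ loses a factor $T^{-2s}$ that one cannot recover without exploiting the $H^s$-regularity of $f$ through Sobolev embedding, and this exchange is exactly what forces the restriction $|s|<\tfrac12$.
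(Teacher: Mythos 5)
Your proof of \eqref{lem - time dependence of Sobolev norms - main eqn 2} is the same as the paper's (convolution in $\tau$, the measure-preserving shift $\tau\mapsto\tau\pm\xi$, then apply \eqref{lem - time dependence of Sobolev norms - main eqn 1} fibrewise in $\xi$). For \eqref{lem - time dependence of Sobolev norms - main eqn 1} you take a genuinely different route. The paper proves it in the appendix via the Besov product inequality $\|fg\|_{H^s}\lesa\|f\|_{B^{1/2}_{2,1}}\|g\|_{H^s}$, established by a Littlewood--Paley trichotomy, together with the scale-stability $\|\rho_T\|_{B^{1/2}_{2,1}}\lesa\|\rho\|_{B^{1/2}_{2,1}}$ for $T\les1$. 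You instead work directly with the Gagliardo--Slobodeckij seminorm after a duality reduction to $0\les s<\frac12$, and cancel the $T^{-2s}$ loss from rescaling $\rho$ against the $T^{2s}$ gain coming from H\"older and the embedding $H^s\hookrightarrow L^{2/(1-2s)}$. Both arguments isolate the same critical $\frac12$-derivative on $\rho$ (encoded as $B^{1/2}_{2,1}$ in the paper, as $\|h\|_{L^{1/(2s)}}$ in yours); yours avoids Littlewood--Paley at the cost of a duality step, while the paper's Besov estimate is more modular.

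There is one genuine gap: the claim that ``$\rho\in L^2$ forces $h(u')=O(|u'|^{-1-2s})$ as $|u'|\to\infty$'' is false for general $\rho\in H^1$. It would hold if $\rho$ had compact support (and indeed the applications in the paper use $\rho\in C^\infty_0$), but the lemma as stated allows any $\rho\in H^1$, and then the local part $\int_{|u-u'|\les1}$ of $h(u')$ is governed by the modulus of continuity of $\rho$ near $u'$, not by any power of $|u'|$. Concretely, if $\rho$ is a sum of disjoint unit-width bumps of height $a_n$ centred at the integers, then $\rho\in H^1$ iff $\sum a_n^2<\infty$, yet $h(n)\gtrsim a_n^2$, which can decay arbitrarily more slowly than $n^{-1-2s}$. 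The conclusion $h\in L^{1/(2s)}$ is nevertheless correct and has a cleaner proof: your $C^{1/2}$ and $L^2\cap L^\infty$ bounds already give $\|h\|_{L^\infty}\lesa\|\rho\|_{H^1}^2$, and Fubini gives
\[
\|h\|_{L^1}=\int_{\RR}\int_{\RR}\frac{|\rho(u)-\rho(u')|^2}{|u-u'|^{1+2s}}\,du\,du'\approx\|\rho\|_{\dot H^s}^2\lesa\|\rho\|_{H^1}^2,
\]
so $h\in L^1\cap L^\infty\subset L^{1/(2s)}$ by interpolation. Substituting this for the decay claim closes the gap, and the rest of your argument goes through.
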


\section{Local Existence}\label{sec local existence}

We start by noting that if we let $u_\pm = \psi_1 \pm \psi_2$ and $A_\pm = A_0 \mp A_1$, we can rewrite (\ref{CSD}) in the form
    \begin{equation} \label{Dirac pm} \begin{split}
        i ( \p_t u_+ + \p_x u_+ ) &= m u_- - A_- u_+  \\
          i ( \p_t u_- -  \p_x u_- ) &= m u_+ - A_+ u_- \\
                                    u_\pm(0) &= f_\pm
    \end{split} \end{equation}
and
    \begin{equation}\label{Gauge pm} \begin{split}
        \p_t A_+ + \p_x A_+ &= -\Re( u_+ \overline{u}_-) \\
        \p_t A_- - \p_x A_- &= \Re( u_+ \overline{u}_-) \\
                        A_\pm(0) &= a_\pm \end{split}
    \end{equation}
where $f_\pm = f_1 \pm f_2$, $a_\pm = a_0 \mp a_1$, and we use $\Re(z)$ to denote the real part of $z \in \CC$. The formulation (\ref{Dirac pm}), (\ref{Gauge pm}) is much easier to work with than (\ref{CSD}) as the null structure is more apparent. Namely all the nonlinear terms involve products of the form $\psi_+ \phi_-$ which behave far better than the  product $\psi_+ \phi_+$, see for instance the estimates in \cite{Selberg2010b}. The fact that the nonlinear terms in (\ref{Dirac pm}) and (\ref{Gauge pm}) are all + - products is a reflection of the null structure present in the (\ref{CSD}) system.
\newline

We will deduce Theorem \ref{thm - local existence} from the following.

\begin{theorem}\label{thm - small data LWP}
  Let $\frac{-1}{2}< r \les s \les r+1$ and assume $f \in H^s$, $a \in H^r$. Choose $r^*>\frac{-1}{2}$ with $s-1 \les r^* \les r$. Then there exists $\epsilon>0$ such that if $|m|<\epsilon$ and
            $$ \| f \|_{H^r} + \|a \|_{H^{r^*}} < \epsilon$$
  then there exists a solution $(\psi, A) \in C\big( [-1, 1], H^s\times H^r\big)$ to (\ref{CSD}) with $(\psi, A)(0) = (f, a)$. Moreover solution depends continuously on the initial data and if we let $u_\pm = \psi_1 \pm \psi_2$ and $A_\pm = A_0 \mp A_1$ then
            $$ u_\pm \in Z^{s, b}_\pm(S) \cap Y^{s, -1}_\pm(S), \qquad A_\pm \in Z^{r, b}_\pm(S) \cap Y^{s, -1}_\pm(S)$$
  for any $b>\frac{1}{2}$ with $s\les b\les r^*+1$ and $S=[-1, 1] \times \RR$.
\end{theorem}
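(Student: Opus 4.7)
The plan is to run a Picard iteration for the $\pm$-formulation (\ref{Dirac pm})--(\ref{Gauge pm}). Fix a parameter $b$ with $\frac{1}{2} < b$ and $s \les b \les r^*+1$; the hypotheses $s \les r+1$, $r^* \g s-1$, and $r^* > -\frac{1}{2}$ make this interval nonempty. I will seek a fixed point of the Duhamel map $\Phi$ on a closed ball of radius $C\epsilon$ in the product space
\[
  X_\epsilon = \bigl(Z^{s,b}_+ \cap Y^{s,0}_+\bigr)(S) \times \bigl(Z^{s,b}_- \cap Y^{s,0}_-\bigr)(S) \times \bigl(Z^{r,b}_+ \cap Y^{r,0}_+\bigr)(S) \times \bigl(Z^{r,b}_- \cap Y^{r,0}_-\bigr)(S).
\]
Applying Lemma \ref{lem - energy est} to each of the four linear transport equations reduces the self-mapping and contraction bounds for $\Phi$ to estimates on the forcing terms in $Z^{s,b-1}_\pm \cap Y^{s,-1}_\pm$ (for the Dirac components) and $Z^{r,b-1}_\pm \cap Y^{r,-1}_\pm$ (for the gauge components). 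The linear mass contribution $\| m u_\mp \|_{Z^{s,b-1}_\pm \cap Y^{s,-1}_\pm} \lesa |m|\,\| u_\mp \|_{Z^{s,b}_\mp \cap Y^{s,0}_\mp}$ is harmless and furnishes the constraint $|m| < \epsilon$.

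The heart of the argument is therefore the three $+/-$ bilinear estimates
\begin{align*}
  \| A_\mp u_\pm \|_{Z^{s,b-1}_\pm(S) \cap Y^{s,-1}_\pm(S)}
  &\lesa \| A_\mp \|_{Z^{r,b}_\mp(S) \cap Y^{r,0}_\mp(S)} \, \| u_\pm \|_{Z^{s,b}_\pm(S) \cap Y^{s,0}_\pm(S)}, \\
  \| u_+ \overline{u_-} \|_{Z^{r,b-1}_\pm(S) \cap Y^{r,-1}_\pm(S)}
  &\lesa \| u_+ \|_{Z^{s,b}_+(S) \cap Y^{s,0}_+(S)} \, \| u_- \|_{Z^{s,b}_-(S) \cap Y^{s,0}_-(S)}.
\end{align*}
Every nonlinear term in (\ref{Dirac pm})--(\ref{Gauge pm}) is a mixed $+/-$ product (this is the null structure observed in the introduction), which is precisely the setting of Lemmas \ref{lem - Y nonlinear est} and \ref{lem - Z product estimate}; the $Z$-component of each estimate comes from Lemma \ref{lem - Z product estimate} and the $Y$-component from Lemma \ref{lem - Y nonlinear est}.

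The main obstacle is checking the admissibility conditions hidden in the relation $\prec$ throughout the whole parameter region $-\frac{1}{2} < r \les s \les r+1$. For the first estimate one takes $(s_1,b_1,s_2,b_2) = (s,b,r,b)$ with target regularity $s$; for the second, $(s,b,s,b)$ with target $r$. In the $Y$ estimate, Lemma \ref{lem - Y nonlinear est} is invoked with auxiliary parameters $a_0$ just below $\min\{s,b\}$ and $b_0$ just below $\min\{r,b\}$ (respectively $\min\{s,b\}$). Each inequality packaged into $\prec$ then collapses to a combination of $r > -\frac{1}{2}$, $r \les s$, $s \les r+1$, $b > \frac{1}{2}$, and $s-1 \les r^* \les r$; the sum conditions $s_i+b_i > -\frac{1}{2}$ are immediate from $r > -\frac{1}{2}$ and $b > \frac{1}{2}$. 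The auxiliary parameter $r^*$ appears precisely because the half-derivative loss in Lemma \ref{lem - Y nonlinear est} forces the gauge regularity entering the $Y$ estimate to exceed $s-1$. The case analysis is tedious but routine.

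With the three bilinear estimates in hand, smallness of $\epsilon$ makes $\Phi$ a contraction on $X_\epsilon$, producing a unique fixed point $(u_\pm, A_\pm)$. Reverting the change of variables $u_\pm = \psi_1 \pm \psi_2$, $A_\pm = A_0 \mp A_1$ gives a solution of (\ref{CSD}), and the embeddings $Z^{\sigma,b}_\pm \cap Y^{\sigma,0}_\pm \hookrightarrow C([-1,1], H^\sigma)$ for $\sigma = s, r$ supply the claimed continuity in time. Continuous dependence is obtained by running the same bilinear estimates on the difference of two solutions.
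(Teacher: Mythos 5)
Your setup conflates where the smallness hypothesis lives with where the solution is built, and this is a genuine gap. The theorem only assumes $\| f \|_{H^r} + \| a \|_{H^{r^*}} < \epsilon$; the norms $\| f\|_{H^s}$ and $\| a\|_{H^r}$ can be arbitrarily large. Consequently the linear evolution of the data has $\| e^{\mp t\p_x} f_\pm\|_{Z^{s,b}_\pm \cap Y^{s,0}_\pm} \approx \| f_\pm\|_{H^s}$, which need not be $O(\epsilon)$, so your Duhamel map $\Phi$ does not send a ball of radius $C\epsilon$ in $X_\epsilon$ (built at the top regularity $(s,r)$) into itself. The single small ball cannot work.

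The paper's actual argument uses a two-tier constraint set and, correspondingly, bilinear estimates with one factor at the \emph{lower} regularity. Concretely, it defines $E^\sigma$ with $\| v\|_{E^\sigma} = \sum_\pm \| v_\pm\|_{Z^{\sigma, b}_\pm(S)} + \| v_\pm \|_{Y^{\sigma, 0}_\pm(S)}$ and iterates in
\[
\mathcal{X}_\epsilon = \{ \| u\|_{E^r} + \| A\|_{E^{r^*}} \les 2C\epsilon \} \cap \{ \| u\|_{E^s} + \| A\|_{E^r} \les 2C\Gamma \}, \qquad \Gamma = \sum_\pm \| f_\pm\|_{H^s} + \| a_\pm\|_{H^r},
\]
and the bilinear estimates are of the form $\| uv\|_{Z^{s', b-1}_\pm} + \| uv\|_{Y^{s',-1}_\pm} \lesa \| u\|_{Z^{s',b}_\pm} \| v\|_{Z^{r^*, b}_\mp}$ for $r \les s' \les s$ — one factor at the regularity $r^*$, which is the level where smallness is available. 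This produces the crucial structure $\| v\|_{E^s} + \| B\|_{E^r} \lesa \Gamma + |m|\Gamma + \epsilon \Gamma$ and $\| v\|_{E^r} + \| B\|_{E^{r^*}} \lesa \epsilon + |m|\epsilon + \epsilon^2$, which closes at both levels. Your proposed estimates such as $\| A_\mp u_\pm\|_{Z^{s,b-1}_\pm \cap Y^{s,-1}_\pm} \lesa \| A_\mp\|_{Z^{r,b}_\mp \cap Y^{r,0}_\mp} \| u_\pm\|_{Z^{s,b}_\pm \cap Y^{s,0}_\pm}$ put both factors at the top regularities $(r,s)$; they are true but useless here, since neither factor is controlled by $\epsilon$. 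The role of $r^*$ is also slightly different from what you describe: beyond enabling the choice $s \les b \les r^*+1$, it is precisely the regularity at which the gauge's smallness is measured and at which one factor of every bilinear estimate is placed. This two-tier structure is essential — it is what makes the local time of existence (after rescaling) depend only on the size of the data at $(H^r, H^{r^*})$, which is the fact later exploited in the persistence-of-regularity and global-existence arguments.
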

\begin{figure}
\setlength{\unitlength}{0.254mm}
\begin{picture}(382,236)(285,-245)
        \allinethickness{0.254mm}\put(375,-245){\vector(0,1){235}} 
        \allinethickness{0.254mm}\put(285,-165){\vector(1,0){375}} 
        \allinethickness{0.254mm}\path(375,-165)(495,-45) 
        \allinethickness{0.254mm}\path(375,-165)(320,-220) 
        \allinethickness{0.254mm}\dottedline{5}(320,-220)(430,-220) 
        \allinethickness{0.254mm}\path(430,-220)(595,-55) 
        \allinethickness{0.254mm}\path(595,-55)(605,-45) 
        \allinethickness{0.254mm}\path(320,-165)(320,-170) 
        \allinethickness{0.254mm}\path(320,-160)(320,-165) 
        \allinethickness{0.254mm}\path(485,-165)(485,-170) 
        \allinethickness{0.254mm}\path(485,-165)(485,-160) 
        \put(305,-186){\shortstack{$-\frac{1}{2}$}} 
        \put(485,-186){\shortstack{$1$}} 
        \put(650,-181){\shortstack{$s$}} 
        \put(360,-21){\shortstack{$r$}} 
        \put(485,-41){\shortstack{$s=r$}} 
        \put(595,-41){\shortstack{$s=r+1$}} 
        \allinethickness{0.254mm}\dottedline{5}(515,-100)(515,-135) 
        \allinethickness{0.254mm}\dottedline{5}(515,-100)(440,-100) 
        \allinethickness{0.254mm}\dottedline{5}(515,-135)(440,-135) 
        \allinethickness{0.254mm}\dottedline{5}(440,-135)(440,-100) 
        \put(518,-95){\shortstack{$(s, r)$}} 
        \put(425,-150){\shortstack{$(r, s-1)$}} 
        \put(512,-103){\shortstack{$\bullet$}}
        \put(437,-138){\shortstack{$\bullet$}}
\end{picture}
\caption{The time of existence given by the rescaled version of Theorem \ref{thm - small data LWP} at regularity $H^s\times H^r$, only depends on the size of the initial data at the regularity $H^r \times H^{s-1}$ (provided $s-1>\frac{-1}{2}$).}
\label{figure - time of existence}
\end{figure}
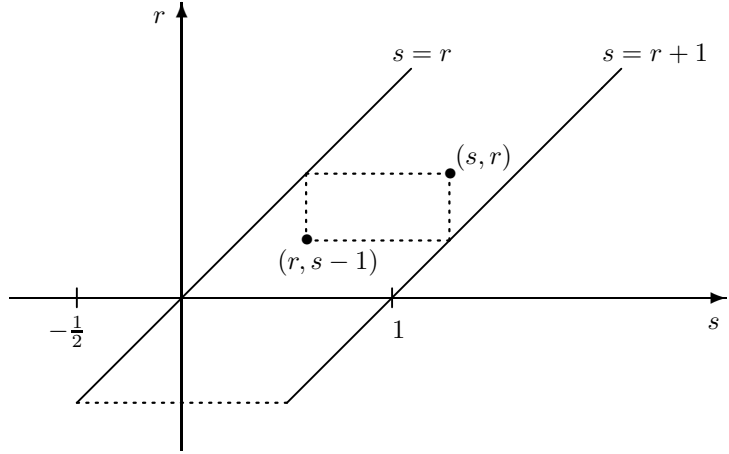

Assume for the moment that Theorem \ref{thm - small data LWP} holds, we deduce Theorem \ref{thm - local existence} as follows. Let $(f, a) \in H^{s}\times H^{r}$ with $\frac{-1}{2}< r\les s\les r+1$. Theorem \ref{thm - small data LWP} together with a scaling argument then gives a solution $(\psi, A) \in C([-T, T], H^s \times H^r)$ that depends continuously on the initial data, where $T$ only depends on some negative power of $\| f\|_{H^r} + \| a\|_{H^{r^*}}$ with $r^*>\frac{-1}{2}$ and $s-1\les r^* \les r$, see Figure \ref{figure - time of existence}. The uniqueness we leave till the next section. Hence to complete the proof of Theorem \ref{thm - local existence} it only remains to check that any additional regularity persists in time.

Suppose the initial data has additional smoothness $(f, a) \in H^{r^*} \times H^{s^*}$ with $s^*>s$, $r^*>r$, and $r^*\les s^* \les r^* + 1$. Applying the local existence result we have $(\psi , A) \in C\big( (-T^*, T^*), H^{s^*} \times H^{r^*}\big)$ for some $T^*>0$. Persistence of regularity will follow if we can obtain $T^* \g T$. To this end, we note that it is enough to show that if $T^*< T$ and
               \begin{equation}\label{persistance argument}  \limsup_{t \rightarrow T^*} \big(\| \psi(t) \|_{H^{s^*}} + \| A(t) \|_{H^{r^*}} \big)= \infty \end{equation}
then we also have
                \begin{equation}\label{persistence argument end} \limsup_{t \rightarrow T^*} \big( \| \psi(t) \|_{H^{s}} + \| A(t) \|_{H^{r}}\big) = \infty. \end{equation}
This is done in steps as follows. We first deduce by the rescaled version of Theorem \ref{thm - small data LWP} that
               \begin{equation}\label{persistence argument eqn2} \limsup_{t\rightarrow T^*} \big( \| \psi(t) \|_{H^{r^*}} + \| A(t) \|_{H^{\max\{s^*-1, \frac{-1}{2} + \epsilon\}}} \big)= \infty\end{equation}
for any sufficiently small $\epsilon>0$. Since if not, then we can choose some sequence of points $t_n \rightarrow T^*$ with $\sup_{n} \| \psi(t_n)\|_{H^{r^*}} + \| A(t_n) \|_{H^{\max\{ s^*-1, \frac{-1}{2} + \epsilon\}}}<\infty$. Taking $t_n$ sufficiently close to $T$ and applying a rescaled version of Theorem \ref{thm - small data LWP} with initial data $(\psi(t_n), A(t_n))$, we can extend our solution beyond $T^*$, contradicting (\ref{persistance argument}). Thus provided $T^*<\infty$ and (\ref{persistance argument}) holds, we must have (\ref{persistence argument eqn2}).

Repeating this argument again with (\ref{persistance argument}) replaced with (\ref{persistence argument eqn2}) we obtain
                $$ \limsup_{t\rightarrow T^*} \big(\| \psi(t) \|_{H^{\max\{s^*-1, \frac{-1}{2} + \epsilon\}}} + \| A(t) \|_{H^{\max\{ r^*-1, \frac{-1}{2} + \epsilon\}}} \big)= \infty.$$
We now continue in this manner and observe that after $k$ iterations, the $H^{\max\{s^*-k, \frac{-1}{2} + \epsilon\}} \times H^{\max\{r^*-k, \frac{-1}{2} + \epsilon\}}$ norm must blowup as we approach $T^*$. Taking $k$ such that $s^*-k \les s$ and $r^*-k\les r$ we obtain (\ref{persistence argument end}) as required.
\newline

We now come to the proof of small data local well-posedness for (\ref{CSD}).

  \begin{proof}[Proof of Theorem \ref{thm - small data LWP}]
  Let $\frac{-1}{2}<r \les s \les r+1$ and choose $b > \frac{1}{2}$ with $s \les b \les r^*+1$. Note that this is possible since $r^* \g s-1$ and $r^*>\frac{-1}{2}$. Let $r\les s' \les s$. We claim that Lemma \ref{lem - Y nonlinear est} and Lemma \ref{lem - Z product estimate} imply the estimates
        \begin{equation}\label{small data nonlinear est1}
            \| u v \|_{Y^{r, -1}_{\pm} } \les \| u v \|_{Y^{s', -1}_\pm} \lesa \| u \|_{Z^{s', b}_\pm} \| v\|_{Z^{r^*, b}_\mp}
        \end{equation}
  and
        \begin{equation}\label{small data nonlinear est2}
            \| u v \|_{Z^{r, b-1}_\pm} \les \| u v\|_{Z^{s', b-1}_\pm } \lesa \| u \|_{Z^{s', b}_\pm} \| v\|_{Z^{r^*, b}_\mp}.
        \end{equation}
  To obtain the estimate (\ref{small data nonlinear est1}), an application of Lemma \ref{lem - Y nonlinear est} reduces the problem to showing that there exists $a_0, b_0 \in \RR$ such that
     \begin{align*}  a_0 \prec \{s', b\}, \qquad b_0 \prec \{ r^* , b\},  \qquad s' \prec\{a_0 , b_0 +1 \} \\
                            s' + b > \frac{-1}{2}, \qquad r^* + b >  \frac{-1}{2}. \end{align*}
     Since $r^*\les r \les s' \les s \les b$, we let $a_0 = s'$, $b_0 = r^*$. It is clear that $s' \prec \{ s', b\}$ and $r^* \prec\{ r^*, b\}$. Thus the only remaining conditions are
            $$s'+ r^*+ 1 \g 0, \qquad \qquad s' \les r^*+1, \qquad s'< s'+r^*+1 - \frac{1}{2}.$$
     But these also hold provided $r^*, s' > \frac{-1}{2}$ and $s' \les r^*+1$, which follows since $s' \les s\les r^*+1$. Consequently (\ref{small data nonlinear est1}) holds.

     The remaining estimate, (\ref{small data nonlinear est2}), follows from  Lemma \ref{lem - Z product estimate} provided that
            $$ s' \prec\{ s', b\}, \qquad \qquad b-1 \prec \{ r^*, b\}.$$
     Using the assumptions $s', r^* > \frac{-1}{2}$ and $ b> \frac{1}{2}$ this reduces to
            \begin{align*}
              s'\les b &,  &\,\,\,\,s'<s'+b - \frac{1}{2} \\
              b-1\les r^* &, &b-1< r^*+b - \frac{1}{2}.
            \end{align*}
     These inequalities also hold in view of the assumptions $\frac{-1}{2} <s' \les b$ and $\frac{1}{2} <  b \les r^*+1$. Therefore (\ref{small data nonlinear est1}) and (\ref{small data nonlinear est2}) both hold.

It suffices to consider the system (\ref{Dirac pm}) and (\ref{Gauge pm}) with the assumption
        $$ \sum_{\pm} \| f_\pm \|_{H^r} + \| a_\pm \|_{H^{r^*}} < \epsilon.$$
Let $S = [-1, 1] \times \RR$ and define the Banach space $E^s = \big\{ v = (v_+, v_-) \, \big| \, v_\pm \in Z^{s, b}_\pm(S) \cap Y^{s, 0}_\pm(S)\big\}$ with norm
        $$ \| v \|_{E^s} = \sum_{\pm} \| v_\pm \|_{Y^{s, 0}_{\pm} (S)} + \|v_\pm \|_{Z^{s, b}_\pm(S)} $$
Note that since $Y^{s, 0}_\pm (S) \subset L^\infty_t H^s_x(S)$ we have $\| v\|_{L^\infty_t H^s_x(S)} \lesa \| v\|_{E^s}$. Let $\Gamma = \sum_{\pm} \| f_\pm \|_{H^s} + \| a_\pm \|_{H^r}$ and define the closed subset $\mathcal{X}_\epsilon \subset E^s \times E^r$ by
        $$\mathcal{X}_\epsilon = \{  \| u \|_{E^r} + \| A \|_{E^{r^*}} \les 2 C \epsilon \}\cap \{ \| u \|_{E^s} + \|A\|_{E^r} \les 2C \Gamma\}. $$
Define the  map $\mathcal{S} : \mathcal{X}_\epsilon \longrightarrow \mathcal{X}_\epsilon$ by letting $\mathcal{S}(u, A) = (v, B)$ be the solution to
        \begin{equation}
            \begin{split}
                i ( \p_t \pm \p_x) v_\pm &= m u_\mp  + A_\mp u_\pm \\
                (\p_t \pm \p_x) B_\pm &= \pm \Re( u_+ \overline{u}_-) \\
                                    v_\pm(0) &= f_\pm, \qquad B_\pm(0) = a_\pm.
            \end{split}
        \end{equation}
Then using Lemma \ref{lem - energy est} together with (\ref{small data nonlinear est1}) and (\ref{small data nonlinear est2}) we obtain
    $$ \| v\|_{E^s} + \| B\|_{E^r} \lesa \sum_\pm\big( \| f_\pm \|_{H^s} + \| a_\pm \|_{H^r}\big) + |m| \big( \| u\|_{E^s} + \| A\|_{E^r}\big) + \big( \| u\|_{E^r} + \| A\|_{E^{r^*}}\big) \big(\| u\|_{E^s} + \| A\|_{E^r}\big) $$
and
     $$ \| v\|_{E^r} + \| B\|_{E^{r^*}} \lesa \sum_\pm \big(\| f_\pm \|_{H^r} + \| a_\pm \|_{H^{r^*}}\big) + |m| \big( \| u\|_{E^r} + \| A\|_{E^{r^*}}\big) + \big( \| u\|_{E^r} + \| A\|_{E^{r^*}}\big)^2.$$
The assumption $(u, A) \in \mathcal{X}_\epsilon$ then gives the inequalities
    \begin{align*}
         \| v\|_{E^s} + \| B\|_{E^r} &\les C \Gamma +   C \epsilon \Gamma + C^2 \epsilon \Gamma\\
         \| v\|_{E^r} + \| B\|_{E^{r^*}} &\les C \epsilon + C \epsilon^2 + C^2 \epsilon^2
    \end{align*}
Therefore, provided $\epsilon$ is sufficiently small, depending only on the constants in (\ref{small data nonlinear est1}), (\ref{small data nonlinear est2}), and (\ref{lem - energy est - eqn main}), we see that $\mathcal{S}$ is well defined. A similar argument shows that $\mathcal{S}$ is a contraction mapping, consequently we have existence, uniqueness in $\mathcal{X}_\epsilon$, and continuous dependence on the initial data.
\end{proof}

\section{Uniqueness}\label{sec uniqueness}

In this section we will complete the proof of Theorem \ref{thm - local existence} and show that the solution obtained in Section \ref{sec local existence} is unique. More precisely, we will prove the following.
\begin{proposition}\label{prop - uniqueness}
Let $\frac{-1}{2}< r \les s \les r+1$,  $T>0$, and $b>\frac{1}{2}$. Define $S_T=[-T, T]\times \RR$. Assume $(u, A)$ and $(v, B)$ are solutions to (\ref{Dirac pm}) and (\ref{Gauge pm}) with $u_\pm, v_\pm \in Z_\pm^{s, b}(S_T)$ and $A_\pm, B_\pm \in Z_\pm^{r, b}(S_T)$. If $(u, A)(0) = (v, B)(0)$ then $(u, A) = (v, B)$ on $S_T.$
\end{proposition}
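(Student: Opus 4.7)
The plan is to derive an \emph{a priori} estimate on the difference
$(w_\pm, C_\pm) := (u_\pm - v_\pm,\, A_\pm - B_\pm)$
and close it by a contraction argument on a short time interval, then iterate in time. Subtracting the equations of (\ref{Dirac pm}) and (\ref{Gauge pm}) shows that $(w, C)$ solves the linear system
\begin{align*}
 i(\p_t \pm \p_x)w_\pm &= m\, w_\mp - A_\mp w_\pm - C_\mp v_\pm, \\
 (\p_t \pm \p_x) C_\pm &= \mp \Re\bigl(w_+ \overline{u}_- + v_+ \overline{w}_-\bigr),
\end{align*}
with zero initial data. The goal is to show that this forces $(w, C) \equiv 0$ on $S_{T_0}$ for some $T_0 > 0$ depending only on the $Z$-norms of the two solutions.

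The central tool should be the promised refinement of Lemma \ref{lem - energy est} (Proposition \ref{prop - energy for Z with time dependence}), which for $0 < b' < \tfrac{1}{2}$ should provide a time-gain inequality of the form
$$ \| u\|_{Z^{s, b'}_\pm(S_{T_0})} \lesa \| u(0)\|_{H^s} + T_0^{1/2 - b'} \inf_{F'|_{S_{T_0}} = F} \| F'\|_{Z^{s, b'-1}_\pm}. $$
Applied to $(w, C)$ with vanishing data, the factor $T_0^{1/2 - b'}$ is the source of smallness as $T_0 \to 0$. Crucially, the difference only has to be controlled at the weaker regularity $Z^{\cdot, b'}$ with $b' < \tfrac{1}{2} < b$: since $\| \cdot \|_{Z^{s', b'}(S_{T_0})} \les \| \cdot \|_{Z^{s, b}(S_{T_0})}$ whenever $s' \les s$ and $b' \les b$, the hypothesis $u, v \in Z^{s, b}_\pm(S_T)$ and $A, B \in Z^{r, b}_\pm(S_T)$ automatically places the difference in any such weaker space.

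To close, the nonlinear terms on the right are estimated via Lemma \ref{lem - Z product estimate} (and, if needed, Lemma \ref{lem - Y nonlinear est}). One estimates $A_\mp w_\pm$ and $C_\mp v_\pm$ in $Z^{\sigma, b'-1}_\pm$ by pairing the full-regularity factor in $Z^{r, b}_\mp$ or $Z^{s, b}_\pm$ against the difference in $Z^{\sigma, b'}_\pm$, and similarly for $w_+ \overline{u}_- + v_+ \overline{w}_-$ in $Z^{\rho, b'-1}_\pm$. After possibly lowering to some $-\tfrac{1}{2} < \rho \les \sigma$ with $\sigma - \rho \in [0, 1]$, $\sigma \les s$, $\rho \les r$, and taking $b'$ close enough to $\tfrac{1}{2}$, the $\prec\{\cdot, \cdot\}$ conditions of Lemma \ref{lem - Z product estimate} are all satisfied with strict slack. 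Summing then produces
$$ \| w\|_{Z^{\sigma, b'}_\pm(S_{T_0})} + \| C\|_{Z^{\rho, b'}_\pm(S_{T_0})} \les C\, T_0^{1/2 - b'}\, M \,\bigl(\| w\|_{Z^{\sigma, b'}_\pm(S_{T_0})} + \| C\|_{Z^{\rho, b'}_\pm(S_{T_0})}\bigr), $$
where $M$ dominates the $Z^{s, b} \times Z^{r, b}$ norms of both solutions on $S_T$. Choosing $T_0$ with $CT_0^{1/2 - b'} M < 1$ forces the left side to vanish, and a finite iteration in time extends uniqueness from $S_{T_0}$ to all of $S_T$.

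The main obstacle is establishing the refined energy estimate itself, because the $Z^{s, b}_\pm$ restriction spaces do not rescale cleanly under time dilation and the $T_0^{1/2 - b'}$ factor is not directly visible from Lemma \ref{lem - energy est}. The likely strategy is to insert a smooth cutoff $\rho_{T_0}(t)$ via Lemma \ref{lem - time dependence of Sobolev norms}, rewrite the half-wave equation by Duhamel in the null coordinate $x \pm t$, and extract the time gain from a H\"older-in-time argument that exploits the slack $\tfrac{1}{2} - b' > 0$ in the $\al \tau \pm \xi\ar^{b'}$ weight. Once this estimate is in place, the bilinear checks and the iteration argument reduce to bookkeeping.
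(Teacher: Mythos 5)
Your overall strategy --- subtract the two solutions, view the difference as solving a linear system with zero data, use a time-localised energy estimate to extract a positive power of $T_0$, and iterate --- is the same as the paper's. The nonlinear checks via Lemma \ref{lem - Z product estimate} and Lemma \ref{lem - Y nonlinear est}, and the final continuity/iteration step, are also essentially what the paper does (after first reducing to $\frac{-1}{2}<r\les s<0$). What differs, and where there is a real gap, is the refined energy estimate you postulate.

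You propose, for some $b'<\tfrac12$, an estimate of the form
$\| u\|_{Z^{s, b'}_\pm(S_{T_0})} \lesa \| u(0)\|_{H^s} + T_0^{1/2 - b'} \inf \| F'\|_{Z^{s, b'-1}_\pm}$,
so that the time-gain sits directly on the inhomogeneous $Z$-norm. This is not the estimate that holds, and it is not the one the paper proves. Proposition \ref{prop - energy for Z with time dependence} works with $b>\tfrac12$ (close to $\tfrac12$) and reads
$\| \rho_T \psi \|_{Z^{s, b}_\pm} \lesa T^{\frac{1}{2} - b} \| \sigma_T \psi \|_{Y^{s, 0}_\pm} + T^\epsilon \| F \|_{Z^{s, b-1+\epsilon}_\pm}$.
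Two things to note. First, in the low-modulation region $|\tau\pm\xi|\lesa T^{-1}$ the $\al\tau\pm\xi\ar^b$ weight gives no help, and the $Z$-norm there is genuinely controlled by the $Y^{s,0}$ norm of the \emph{solution} $\psi$ (times $T^{\frac12-b}$), not by $\|u(0)\|_{H^s}$; your estimate omits this term, and the bound as written is not available. Second, the usable gain $T^\epsilon$ comes from \emph{lowering the modulation regularity} of $F$ from $b-1$ to $b-1+\epsilon$ (and the prefactor $T^{\frac12-b}$, being a loss since $b>\tfrac12$, must be compensated by the $Y$-energy estimate, Lemma \ref{lem - Y energy est}, which supplies its own $T^\epsilon$ when the data vanish). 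Consequently the $Y$-type bilinear estimate of Lemma \ref{lem - Y nonlinear est} is not an optional backup as your ``if needed'' suggests --- it is structurally required, because the $Y^{s,0}$ norm of the difference enters the $Z$-bound. Once these two points are incorporated, the rest of your bookkeeping (the $\prec\{\cdot,\cdot\}$ checks, the mass-term estimate $\|\psi\|_{Y^{s,2\epsilon-1}_\pm}\lesa\|\psi\|_{Z^{s,b}_\mp}$, and the iteration in time using that the solutions themselves lie in $Z^{s,b}$ with $b>\tfrac12$ so traces are well defined) goes through as you outline.
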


The proof of Proposition \ref{prop - uniqueness} is slightly involved as we need to understand the behaviour of the energy inequality Lemma \ref{lem - energy est} on the domain $S_T$ for small $T$. For the $Y^{s, b}$ component this is reasonably straightforward.

\begin{lemma}\label{lem - Y energy est}
Let $s \in \RR$, $0<T<1$, and $0<\epsilon<1$. Suppose $\psi$ is a solution to
            \begin{align*} \p_t \psi \pm \p_x \psi &= F \\
                                          \psi(0) &= f. \end{align*}
Let $\rho \in C^\infty_0$ and define $\rho_T(t) = \rho\big( \frac{t}{T} \big)$. Then
    \begin{align}\label{lem - Y energy est - main eqn} \|  \rho_T(t) \psi \|_{Y^{s, 0 }_\pm} &\lesa_\rho \|f\|_{H^s} + \big\| \al \xi \ar^s \min\{ T, |\tau \pm \xi|^{-1} \} \widetilde{F} \big\|_{L^2_\xi L^1_\tau}\\
    &\lesa \| f\|_{H^s} + T^\epsilon \| F\|_{Y^{s, \epsilon -1}_\pm} \label{lem - Y energy est - main eqn 2}\end{align}
with constant independent of $T$.
\begin{proof}
It is easy to see that (\ref{lem - Y energy est - main eqn}) follows from  the estimate
     \begin{equation}\label{lem - Y energy est - mod energy inhom term}
                \Big\| \mathcal{F}_t\Big[ \rho_T(t) \int_0^t e^{\mp i(t-s)\xi} \widehat{F}(s) ds\Big](\tau, \xi) \Big\|_{L^2_\xi L^1_\tau} \lesa \big\| \min\{ T, |\tau \pm \xi|^{-1} \} \widetilde{F} \big\|_{L^2_\xi L^1_\tau }. \end{equation}
 Note that by scaling it is sufficient to consider the case $T=1$. Consequently $\min\{ 1, |\tau \pm \xi |^{-1} \} \approx \al \tau \pm \xi \ar^{-1}$ and so (\ref{lem - Y energy est - mod energy inhom term}) follows from  Lemma 3.2 in \cite{Machihara2010}. The remaining inequality (\ref{lem - Y energy est - main eqn 2}) then follows by observing that since $0<T<1$,
            $$ \min\{ T, |\tau \pm \xi|^{-1} \} \lesa T^\epsilon \al \tau \pm \xi \ar^{\epsilon -1}.$$
\end{proof}
\end{lemma}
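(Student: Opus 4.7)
I would proceed via Duhamel and direct Fourier analysis. Writing $\psi = \psi_h + \psi_i$ with homogeneous part $\psi_h(t,x) = e^{\mp t\p_x} f$ and Duhamel part $\psi_i(t,x) = \int_0^t e^{\mp(t-s)\p_x} F(s)\,ds$, the $Y^{s,0}_\pm$ norm splits accordingly. For $\rho_T \psi_h$ the time-Fourier transform is $\widehat{f}(\xi)\widehat{\rho_T}(\tau\pm\xi)$, and since $\widehat{\rho_T}(\sigma) = T\widehat{\rho}(T\sigma)$ is $L^1_\sigma$-scale invariant, $\|\widehat{\rho_T}\|_{L^1} = \|\widehat{\rho}\|_{L^1}$ is independent of $T$, yielding $\|\rho_T\psi_h\|_{Y^{s,0}_\pm} \lesa_\rho \|f\|_{H^s}$ after weighting by $\al\xi\ar^s$ and taking $L^2_\xi$.

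For $\rho_T \psi_i$, substituting Fourier inversion for $F$ into Duhamel yields
\[
    \widehat{\psi_i}(t,\xi) = \frac{1}{2\pi i}\int \frac{\widetilde{F}(\tau_0,\xi)}{\tau_0\pm\xi}\bigl[e^{it\tau_0} - e^{\mp it\xi}\bigr] d\tau_0,
\]
and multiplying by $\rho_T(t)$ followed by $\mathcal{F}_t$ produces the kernel representation
\[
    \mathcal{F}_t[\rho_T \widehat{\psi_i}(t,\xi)](\tau) = \frac{1}{2\pi i}\int \frac{\widetilde{F}(\tau_0,\xi)}{\tau_0\pm\xi}\bigl[\widehat{\rho_T}(\tau-\tau_0) - \widehat{\rho_T}(\tau\pm\xi)\bigr] d\tau_0.
\]
The main step, and the one I expect to be the technical crux, is the kernel $L^1_\tau$-bound
\[
    \frac{\bigl\|\widehat{\rho_T}(\tau-\tau_0) - \widehat{\rho_T}(\tau\pm\xi)\bigr\|_{L^1_\tau}}{|\tau_0\pm\xi|} \lesa_\rho \min\{T,\,|\tau_0\pm\xi|^{-1}\}.
\]
I would prove this as the minimum of two separate bounds: first, the triangle inequality gives the numerator $\les 2\|\widehat{\rho}\|_{L^1}$, producing the $|\tau_0\pm\xi|^{-1}$ branch; second, the fundamental theorem of calculus combined with the scaling identity $\|\widehat{\rho_T}'\|_{L^1} = T\|\widehat{\rho}'\|_{L^1}$ gives the numerator $\les T\|\widehat{\rho}'\|_{L^1}|\tau_0\pm\xi|$, producing the $T$ branch. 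Minkowski applied in $L^2_\xi$ to the $\tau_0$-integral then delivers (\ref{lem - Y energy est - main eqn}). Note that this kernel $\min\{T,|\tau\pm\xi|^{-1}\}$ is precisely the $T$-dependent analogue of the $\al\tau\pm\xi\ar^{-1}$ weight from Lemma 3.2 of \cite{Machihara2010}, and indeed the whole argument can be viewed as a rescaling-to-$T=1$ reduction of that result.

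The second inequality (\ref{lem - Y energy est - main eqn 2}) follows from the elementary pointwise comparison $\min\{T,|\tau\pm\xi|^{-1}\} \les T^\epsilon\al\tau\pm\xi\ar^{\epsilon-1}$, valid for $0<T<1$ and $0<\epsilon<1$, which reduces in the regime $T\les|\tau\pm\xi|^{-1}$ to $(T|\tau\pm\xi|)^{1-\epsilon}\les 1$ and in the regime $T>|\tau\pm\xi|^{-1}$ to $(T|\tau\pm\xi|)^\epsilon\g 1$, both immediate.
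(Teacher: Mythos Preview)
Your proof is correct and follows essentially the same route as the paper. The only difference is presentational: the paper reduces the inhomogeneous estimate to the case $T=1$ by scaling and then invokes Lemma~3.2 of \cite{Machihara2010}, whereas you carry out the $T$-dependent kernel bound directly via the triangle inequality / fundamental theorem of calculus dichotomy, which is precisely what that scaling argument unwinds to. Your treatment of (\ref{lem - Y energy est - main eqn 2}) is identical to the paper's.
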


It remains to control $Z^{s, b}_\pm$ component of the energy inequality. This is significantly more difficult as both multipliers $\al \tau + \xi \ar$ and $\al \tau - \xi \ar$ involve the time variable. This observation, together with the fact that $Y^{s, 0}$ has a different scaling to $Z^{s, b}$, is the main difficulty in the following proposition.

\begin{proposition}\label{prop - energy for Z with time dependence}
Let $\frac{-1}{2} < s \les 0$ and $0<T<1$. Choose $0<\epsilon<\frac{1}{2}$ and let $\frac{1}{2}<b<\min\{1+s, 1-\epsilon\}$. Assume $\rho, \sigma \in C^\infty_0$ with $\rho(t)=1$ for  $t \in [-1, 1]$, $\sigma(t) = 1$ for $t \in \supp \rho$, and
    $$\supp \rho  \subset \supp \sigma \subset [-2, 2].$$
Define  $\rho_T(t) = \rho\big( \frac{t}{T} \big)$ and $\sigma_T(t) = \sigma\big( \frac{t}{T}\big)$. Let $\psi$ be a solution to
            $$ \p_t \psi \pm \p_x \psi = F.$$
Then
        \begin{equation}\label{prop - energy for Z with time dependence - main eqn} \| \rho_T(t) \psi \|_{Z^{s, b}_\pm} \lesa T^{\frac{1}{2} - b} \| \sigma_T(t) \psi \|_{Y^{s, 0}_\pm}  + T^\epsilon \| F \|_{Z^{s, b-1 +\epsilon}_\pm}\end{equation}
with the implied constant independent of $T$.
\begin{proof}
  We only prove the $+$ case as the $-$ case is similar. Note that since $\sigma_T(t)=1$ on $\supp \rho_T$ we may simply write $\psi = \sigma_T \psi$. Let $\Omega \subset \RR^2$ and define
        $$I(\Omega) = \Big\| \al \tau + \xi \ar^b \al \tau - \xi \ar^s \int_\RR \widehat{\rho_T}(\tau - \lambda) \widetilde{\psi}(\lambda, \xi) d\lambda \Big\|_{L^2_{\tau, \xi}(\Omega)}.$$
  We break $\RR^2$ into different regions and estimate each region separately. We first consider the set
        $$\Omega_1 = \{|\tau + \xi | \les T^{-1}\}$$
  and split this into the regions $2|\tau - \xi| \g |\xi|$ and $2|\tau - \xi | \les |\xi|$. In the former region, since $s\les 0$ and $\al \tau + \xi \ar^b \les T^{-b}$,
    \begin{align*}
        I(\Omega_1 \cap \{2|\tau - \xi | \g |\xi|\} )  &\lesa T^{-b} \Big\| \int_\RR \widehat{\rho_T}(\tau - \lambda) \al \xi \ar^s \widetilde{\psi}(\lambda, \xi) d\lambda \Big\|_{L^2_{\tau, \xi}} \\
        &\lesa T^{-b} \| \widehat{\rho_T}(\tau)\|_{L^2_\tau} \| \psi \|_{Y^{s, 0}_+} \\
        &\lesa_\rho T^{\frac{1}{2} - b} \| \psi \|_{Y^{s, 0}_+}.
    \end{align*}
On the other hand if $2|\tau - \xi| \les |\xi|$ then $|\tau| \approx |\xi| \approx |\tau + \xi| \lesa T^{-1}$. Hence
    \begin{align*}
        I(\Omega_1 \cap \{2|\tau - \xi| \les |\xi|\}) &\lesa \Big\| \al \tau + \xi \ar^{b-s} \al \tau - \xi \ar^s \int \widehat{\rho_T}(\tau - \lambda) \al \xi \ar^s \widehat{\psi}(\lambda, \xi) d\lambda \Big\|_{L^2_{\tau, \xi}(|\tau - \xi|, |\tau + \xi| \lesa T^{-1} ) } \\
          &\lesa T^{s-b} \| \widehat{\rho_T} \|_{L^\infty_\rho} \| \al \tau \ar^s \|_{L^2_\tau( |\tau| \les T^{-1}) } \| \psi \|_{Y^{s, 0}_+} \\
        &\lesa_\rho T^{s - b} \times T \times T^{ - \frac{1}{2} - s} \,\| \psi \|_{Y^{s, 0}_+}  \\
        &= T^{\frac{1}{2}-b} \| \psi \|_{Y^{s, 0}_+}.
    \end{align*}
Therefore
        $$ I( \Omega_1 )  \lesa T^{\frac{1}{2} - b} \| \psi \|_{Y^{s, 0}_+} .$$

We now consider the region $\Omega_2 = \{|\tau + \xi| \g T^{-1}\}$. Note that
        \begin{align*}
            \big(\rho_T(t) \psi\big)^{\widetilde{\,\,\,\,\,\,} }(\tau, \xi) &= \frac{1}{ i( \tau + \xi) } \int i \big((\tau - \lambda) + (\lambda + \xi) \big)  \widehat{\rho_T}(\tau - \lambda) \widetilde{\psi}(\lambda, \xi)  d\lambda\\
                            &= \frac{1}{i(\tau + \xi)} \Big[ T^{-1}  \big( (\p_t\rho)_T \psi\big)^{\widetilde{\,\,\,\,\,\,}}(\tau, \xi) + \big(\rho_T F\big)^{\widetilde{\,\,\,\,\,\,}}(\tau, \xi)\Big]
        \end{align*}
and so, using the fact that $|\tau + \xi| \g T^{-1} \gg 1$ implies $|\tau + \xi | \approx \al \tau + \xi \ar$, we have
    \begin{align}\label{prop - energy for Z with time dependence - decomp into terms}
        I(\Omega_2) \les T^{-1} \Big\|  \al \tau + \xi \ar^{b-1} \al \tau - \xi \ar^s \big( (\p_t\rho)_T \psi\big)^{\widetilde{\,\,\,\,\,\,}} \Big\|_{L^2_{\tau, \xi}(\Omega_2)} + \Big\|  \al \tau + \xi \ar^{b-1} \al \tau - \xi \ar^s \big( \rho_T F\big)^{\widetilde{\,\,\,\,\,\,}} \Big\|_{L^2_{\tau, \xi}(\Omega_2)}. \end{align}
We estimate each of these terms separately. For the first term we follow the $\Omega_1$ case and decompose $\Omega_2$ into $2|\tau - \xi| \g |\xi|$
and $2|\tau - \xi | \les |\xi|$. In the former region we use the fact that $\al \tau + \xi \ar^{b-1} \les T^{1-b}$ to deduce that
\begin{align*}
    T^{-1} \Big\| \al \tau + \xi \ar^{b-1} \al \tau - \xi \ar^s \int \widehat{(\p_t\rho)_T}(\tau - \lambda) &\widetilde{\psi}(\lambda, \xi) d\lambda \Big\|_{L^2_{\tau, \xi}(\Omega_2 \cap \{ 2|\tau - \xi| \g |\xi|\})} \\
    &\lesa T^{-b} \Big\| \int \widehat{(\p_t\rho)_T}(\tau - \lambda) \al \xi \ar^s \widetilde{\psi}(\lambda , \xi) d\lambda \Big\|_{L^2_{\tau , \xi}} \\
    &\lesa T^{-b} \| \widehat{(\p_t\rho)_T}\|_{L^2} \| \psi \|_{Y^{s, 0}_+} \\
    &\lesa_\rho  T^{\frac{1}{2} - b} \| \psi \|_{Y^{s, 0}_+}.
\end{align*}
On the other hand for $2|\tau - \xi |\les |\xi|$ we have $|\tau + \xi | \approx |\xi|$ and so
    \begin{align*}
            T^{-1} \Big\| \al \tau + \xi \ar^{b-1} \al \tau - \xi \ar^s \int &\widehat{(\p_t\rho)_T}(\tau - \lambda) \widetilde{\psi}(\lambda, \xi) d\lambda \Big\|_{L^2_{\tau, \xi}(\Omega_2 \cap \{ 2|\tau - \xi| \les |\xi|\})}\\
            &\lesa T^{-1} \Big\| \al \tau + \xi \ar^{b-1 -s} \al \tau - \xi \ar^s \int  \widehat{(\p_t\rho)_T}(\tau - \lambda) \al \xi \ar^s \widetilde{\psi}(\lambda, \xi) d\lambda \Big\|_{L^2_{\tau, \xi}(\Omega_2)} \\
            &\lesa T^{s - b} \|\psi \|_{Y^{s, 0}_+} \sup_{\xi, \lambda} \big\| \al \tau - \xi \ar^s \widehat{(\p_t\rho)_T}(\tau - \lambda) \big\|_{L^2_\tau}.
    \end{align*}
To control the $\p_t \rho$ term we use
        \begin{align*}
            \big\| \al \tau - \xi \ar^s \widehat{(\p_t\rho)_T}(\tau - \lambda) \big\|_{L^2_\tau}
                        &\lesa \big\| \al \tau - \xi \ar^s \widehat{(\p_t\rho)_T}(\tau - \lambda) \big\|_{L^2_\tau(|\tau - \xi|\les T^{-1} )} + \big\| \al \tau - \xi \ar^s \widehat{(\p_t\rho)_T}(\tau - \lambda) \big\|_{L^2_\tau(|\tau - \xi | \g T^{-1})}\\
                        &\lesa \| \al \tau \ar^s \|_{L^2_\tau (|\tau|\les T^{-1})} \big\| \widehat{(\p_t\rho)_T}\big\|_{L^\infty} + T^{-s} \| \p_t\rho_T \|_{L^2} \\
                        &\lesa_\rho T^{\frac{1}{2} - s}
        \end{align*}
and so we can estimate the first term in (\ref{prop - energy for Z with time dependence - decomp into terms}).

Finally, to estimate the remaining term in (\ref{prop - energy for Z with time dependence - decomp into terms}), we write
     \begin{align*}
           \Big\| \al \tau + \xi \ar^{b-1} \al \tau - \xi \ar^s \int &\widehat{\rho_T}(\lambda - \tau ) \widetilde{F}(\lambda, \xi) d\lambda \Big\|_{L^2_{\tau, \xi}(\Omega_2)}\\
           &\lesa T^\epsilon \Big\| \al \tau + \xi \ar^{b-1+\epsilon} \al \tau - \xi \ar^s \int_{2|\tau + \xi| \les |\lambda + \xi|} \widehat{\rho_T}(\lambda - \tau ) \widetilde{F}(\lambda, \xi) d\lambda \Big\|_{L^2_{\tau, \xi}(\Omega_2)}\\& \qquad \qquad +
           T^\epsilon \Big\| \al \tau + \xi \ar^{b-1+\epsilon} \al \tau - \xi \ar^s \int_{2|\tau + \xi|\g |\lambda + \xi|} \widehat{\rho_T}(\lambda - \tau ) \widetilde{F}(\lambda, \xi) d\lambda \Big\|_{L^2_{\tau, \xi}(\Omega_2)}.
    \end{align*}
In the region $2|\tau + \xi | \les |\lambda + \xi|$ we have $|\lambda  + \xi | \approx | \tau - \lambda|$ and so, using the fact that $|\tau + \xi| \g T^{-1}$,
\begin{align*}
    \Big\| \al \tau + \xi \ar^{b-1+\epsilon} \al \tau - \xi \ar^s \int_{2|\tau + \xi| \les |\lambda + \xi|} &\widehat{\rho_T}(\lambda - \tau ) \widetilde{F}(\lambda, \xi) d\lambda \Big\|_{L^2_{\tau, \xi}(\Omega_2)}\\
            &\lesa \Big\| \al \tau - \xi \ar^s \int  (T|\tau - \lambda|)^{1-b-\epsilon} \big|\widehat{\rho_T}(\tau - \lambda )\big| \al \lambda + \xi \ar^{b-1+\epsilon} \big|\widetilde{F}(\lambda, \xi)\big| d\lambda \Big\|_{L^2_{\tau, \xi}}\\
            &\lesa_\rho \| F\|_{Z^{s, b-1+\epsilon}_+}
\end{align*}
where the last line follows from an application of Lemma \ref{lem - time dependence of Sobolev norms}. On the other hand, if $2|\tau + \xi | \g |\lambda + \xi|$, we can simply use the estimate $\al \tau + \xi \ar^{b-1+\epsilon} \lesa \al \lambda + \xi \ar^{b-1+\epsilon}$
followed by another application of Lemma \ref{lem - time dependence of Sobolev norms}. Therefore we have
        $$\Big\| \al \tau + \xi \ar^{b-1} \al \tau - \xi \ar^s \int \widehat{\rho_T}(\lambda - \tau ) \widetilde{F}(\lambda, \xi) d\lambda \Big\|_{L^2_{\tau, \xi}(\Omega_2)}\lesa T^\epsilon \| F\|_{Z^{s, b-1+\epsilon}_+}$$
and consequently the result follows.

\end{proof}

\end{proposition}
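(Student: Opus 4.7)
The plan is to treat only the $+$ case, the $-$ case following by the symmetry $(\tau, \xi) \mapsto (\tau, -\xi)$. Since $\sigma_T \equiv 1$ on $\supp \rho_T$, I may freely replace $\psi$ by $\sigma_T \psi$ on the right-hand side. The key idea is to split the frequency plane into $\Omega_1 = \{|\tau + \xi| \les T^{-1}\}$ and $\Omega_2 = \{|\tau + \xi| \g T^{-1}\}$, matched to the temporal scale $T^{-1}$ set by $\rho_T$.

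On $\Omega_1$, the weight $\al \tau + \xi \ar^b$ is crudely bounded by $T^{-b}$, so the task reduces to controlling $\al \tau - \xi \ar^s$ against the Fourier convolution $\widehat{\rho_T} *_\tau \widetilde{\psi}$ in $L^2_{\tau,\xi}$. I would then further subdivide based on the size of $|\tau - \xi|$ relative to $|\xi|$: when $|\tau - \xi| \g |\xi|/2$, the inequality $\al \tau - \xi \ar^s \lesa \al \xi \ar^s$ holds since $s \les 0$, and Cauchy-Schwarz paired with $\|\widehat{\rho_T}\|_{L^2} \approx T^{1/2}$ produces $T^{1/2 - b}\|\psi\|_{Y^{s,0}_+}$; when $|\tau - \xi| < |\xi|/2$, the constraints force $|\tau| \approx |\xi| \approx |\tau+\xi| \lesa T^{-1}$, and a careful accounting of $\|\widehat{\rho_T}\|_{L^\infty} \approx T$, $\|\al \tau \ar^s\|_{L^2_\tau(|\tau|\lesa T^{-1})} \approx T^{-1/2 - s}$, and $\al \tau + \xi\ar^{b-s} \les T^{s-b}$ again collapses to $T^{1/2 - b}$.

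On $\Omega_2$ the strategy is to exploit the equation via the identity $(\p_t + \p_x)(\rho_T \psi) = T^{-1}(\p_t \rho)_T \psi + \rho_T F$. Taking the space-time Fourier transform and dividing by $i(\tau + \xi) \approx \al \tau + \xi \ar$, the weight $\al \tau + \xi \ar^b$ is traded for $\al \tau + \xi \ar^{b-1}$ at the price of two source contributions. The $(\p_t \rho)_T \psi$ piece is handled by repeating the $\Omega_1$ argument verbatim, with the extra $T^{-1}$ factor absorbed by the decrease in the weight from $b$ to $b-1$, again yielding $T^{1/2-b}\|\psi\|_{Y^{s,0}_+}$.

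The main obstacle is the $\rho_T F$ piece on $\Omega_2$: the available gain is only $\al \tau + \xi \ar^{b-1}$ while the target norm permits $\al \tau + \xi \ar^{b-1+\epsilon}$, so the slack must be spent on a $T^\epsilon$ factor. I would split once more by comparing $|\tau + \xi|$ with the convolution variable $|\lambda + \xi|$. In the region $2|\tau + \xi| \les |\lambda + \xi|$ we have $|\lambda + \xi| \approx |\tau - \lambda|$, and since $|\tau+\xi| \g T^{-1}$ the factor $(T|\tau-\lambda|)^{1-b-\epsilon}$, whose exponent is positive precisely because $b < 1-\epsilon$, can be absorbed into the convolution kernel, leaving a rescaled bump that behaves like $\rho_T$ under multiplication; on the complementary region, $\al \tau + \xi \ar^{b-1+\epsilon} \lesa \al \lambda + \xi \ar^{b-1+\epsilon}$ is absorbed directly into the $F$ factor. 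In both cases the remaining task is a multiplication-by-$\rho_T$ bound in $Z^{s, b-1+\epsilon}_+$, and Lemma \ref{lem - time dependence of Sobolev norms} closes the estimate, since the exponent $s$ on the $\al \tau - \xi \ar$ weight lies in $(-1/2, 0] \subset (-1/2, 1/2)$ by hypothesis. This last step, coordinating the $T^\epsilon$ gain with the uniform-in-$T$ multiplier bound, is where the delicate interaction between the two scalings of $Y^{s,0}_+$ and $Z^{s,b}_+$ is resolved.
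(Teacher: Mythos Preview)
Your proposal is correct and follows the paper's proof essentially step for step: the same split into $\Omega_1$ and $\Omega_2$, the same sub-splitting on $\Omega_1$ by comparing $|\tau-\xi|$ with $|\xi|$, the use of the equation on $\Omega_2$ to produce the $T^{-1}(\p_t\rho)_T\psi$ and $\rho_T F$ contributions, and the final handling of $\rho_T F$ via the dichotomy $2|\tau+\xi|\lessgtr|\lambda+\xi|$ closed by Lemma~\ref{lem - time dependence of Sobolev norms}. The only cosmetic difference is that where you write ``Cauchy--Schwarz'' the paper is really using Young's convolution inequality $\|f*g\|_{L^2}\le\|f\|_{L^2}\|g\|_{L^1}$ in the $\tau$-variable, but the outcome is identical.
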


We remark that the factor $T^{\frac{1}{2} - b}$ in front of the term $\| \psi \|_{Y^{s, 0}_\pm}$ in (\ref{prop - energy for Z with time dependence - main eqn}) is not ideal as for $T$ small, this will blow up since $b>\frac{1}{2}$. This cannot be avoided, as a simple scaling argument shows that this is in fact the best possible exponent on $T$. Essentially the problem comes because the spaces $Y^{s, 0}$ and $Z^{s, b}$ scale differently, more precisely the $Y^{s, 0}$ space scales like $Z^{s, b}$ at the endpoint $b=\frac{1}{2}$. However, the term $T^{\frac{1}{2} - b}$ is not a huge problem, as if we can take $b$ sufficiently close to $\frac{1}{2}$, then we can safely absorb this into the inhomogeneous term $T^{\epsilon} \| F \|_{Y^{s, \epsilon -1}_\pm}$ in Lemma \ref{lem - Y energy est}.

\begin{corollary}\label{cor - energy inequality with time dilation}
Let $\frac{-1}{2}<s<0$, $0<\epsilon<\frac{1}{6}$, and $\frac{1}{2}<b < \min\{ 1  + \epsilon, 1+s\}$. Assume $0<T<1$ and define $S_T=[-T, T]\times \RR$. Let $\psi$ be the solution to
            $$ \p_t \psi \pm \p_x \psi = F$$
with $\psi(0) = f$. Then
        $$ \| \psi \|_{Z^{s, b}_\pm(S_T)} \lesa T^{\frac{1}{2} - b} \| f\|_{H^s} + T^{\epsilon} \inf_{F' = F \text{ on } S_T} \Big(\| F' \|_{Y^{s, -1 + 2\epsilon}_\pm} + \| F'\|_{Z^{s, b-1+2\epsilon}_\pm}\Big).$$
\begin{proof}
  Follows from Lemma \ref{lem - Y energy est} and Proposition \ref{prop - energy for Z with time dependence}.
\end{proof}
\end{corollary}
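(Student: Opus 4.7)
\medskip

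\noindent\textbf{Plan.} The plan is to derive the corollary by composing Proposition~\ref{prop - energy for Z with time dependence} with Lemma~\ref{lem - Y energy est} after a standard globalisation argument, which is what the hint ``follows from Lemma \ref{lem - Y energy est} and Proposition \ref{prop - energy for Z with time dependence}'' suggests.

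First I would set up the extension. Given $\delta>0$, choose any extension $F'$ of $F$ from $S_T$ to all of $\RR^{1+1}$ with
\begin{equation*}
\|F'\|_{Y^{s,-1+2\epsilon}_\pm}+\|F'\|_{Z^{s,b-1+2\epsilon}_\pm}\les(1+\delta)\inf_{F''=F\text{ on }S_T}\bigl(\|F''\|_{Y^{s,-1+2\epsilon}_\pm}+\|F''\|_{Z^{s,b-1+2\epsilon}_\pm}\bigr),
\end{equation*}
and let $\tilde\psi$ denote the global solution on $\RR^{1+1}$ of $\p_t\tilde\psi\pm\p_x\tilde\psi=F'$ with $\tilde\psi(0)=f$. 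Since the transport equation is solved uniquely by integration along characteristics, $\tilde\psi=\psi$ on $S_T$. Because $\rho_T\equiv 1$ on $[-T,T]$, the function $\rho_T\tilde\psi$ extends $\psi$ from $S_T$ to $\RR^{1+1}$, so by the definition of the restriction norm
\begin{equation*}
\|\psi\|_{Z^{s,b}_\pm(S_T)}\les\|\rho_T\tilde\psi\|_{Z^{s,b}_\pm}.
\end{equation*}

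Next I would chain the two preceding results. Applying Proposition~\ref{prop - energy for Z with time dependence} to $\tilde\psi$ with its $\epsilon$-parameter chosen as some $\epsilon^*\in(0,1-b)$ (a choice that is possible since $b<1+s$ and $s<0$) yields
\begin{equation*}
\|\rho_T\tilde\psi\|_{Z^{s,b}_\pm}\lesa T^{1/2-b}\|\sigma_T\tilde\psi\|_{Y^{s,0}_\pm}+T^{\epsilon^*}\|F'\|_{Z^{s,b-1+\epsilon^*}_\pm}.
\end{equation*}
Applying Lemma~\ref{lem - Y energy est} to $\tilde\psi$ with parameter $2\epsilon$ yields
\begin{equation*}
\|\sigma_T\tilde\psi\|_{Y^{s,0}_\pm}\lesa\|f\|_{H^s}+T^{2\epsilon}\|F'\|_{Y^{s,2\epsilon-1}_\pm}.
\end{equation*}
Substituting the second into the first and sending $\delta\to 0$ after taking an infimum over $F'$ gives the corollary, provided the exponent arithmetic works out.

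\medskip

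\noindent\textbf{Main obstacle.} The delicate part is the exponent bookkeeping in the final combination. Since $b>1/2$ the factor $T^{1/2-b}$ coming from Proposition~\ref{prop - energy for Z with time dependence} is larger than $1$ on $(0,1)$, and one must check that the gain $T^{2\epsilon}$ from Lemma~\ref{lem - Y energy est} and the flexibility in the choice of $\epsilon^*$ can be combined with the monotonicity $\|F'\|_{Z^{s,b-1+\epsilon^*}_\pm}\les\|F'\|_{Z^{s,b-1+2\epsilon}_\pm}$ (for $\epsilon^*\les 2\epsilon$) to produce a single factor $T^\epsilon$ in front of the inhomogeneous terms on the right-hand side. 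The numerical constraints $\epsilon<1/6$ and $b<\min\{1+\epsilon,1+s\}$ with $s\in(-\tfrac12,0)$ are precisely what is needed to make this absorption go through; modulo this book-keeping, the rest of the argument is a direct two-line concatenation of the preceding results.
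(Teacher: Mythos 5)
Your chain of reasoning is the intended one: extend $F$ to $F'$, solve globally, use $\rho_T\equiv 1$ on $[-T,T]$ to reduce $\|\psi\|_{Z^{s,b}_\pm(S_T)}$ to $\|\rho_T\tilde\psi\|_{Z^{s,b}_\pm}$, apply Proposition~\ref{prop - energy for Z with time dependence} to pass to $\|\sigma_T\tilde\psi\|_{Y^{s,0}_\pm}$, and then Lemma~\ref{lem - Y energy est} (applied with $\sigma$ in place of $\rho$) to close. That is exactly what the paper's one-line proof refers to.

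However, you correctly flag the exponent bookkeeping as the only nontrivial step and then wave it away, asserting that the hypotheses ``are precisely what is needed.'' If you actually do the arithmetic you will see that this is not true for the hypothesis as written. Substituting Lemma~\ref{lem - Y energy est} (with parameter $2\epsilon$) into Proposition~\ref{prop - energy for Z with time dependence} (with parameter $\epsilon^*$) gives
\begin{equation*}
\|\rho_T\tilde\psi\|_{Z^{s,b}_\pm}\lesa T^{\frac12-b}\|f\|_{H^s}+T^{\frac12-b+2\epsilon}\|F'\|_{Y^{s,2\epsilon-1}_\pm}+T^{\epsilon^*}\|F'\|_{Z^{s,b-1+\epsilon^*}_\pm},
\end{equation*}
and since $T<1$ the middle coefficient is $\lesa T^\epsilon$ if and only if $\tfrac12-b+2\epsilon\g\epsilon$, i.e.\ $b\les\tfrac12+\epsilon$. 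The corollary's stated constraint $b<\min\{1+\epsilon,1+s\}$ does not imply this (take $s$ near $0$, $b$ near $1$, $\epsilon$ near $0$). The correct constraint is $b<\min\{\tfrac12+\epsilon,1+s\}$ — note that this is exactly how the corollary is invoked in the proof of Proposition~\ref{prop - uniqueness}, where $\tfrac12<b<\tfrac12+\epsilon$ is imposed, and it is also exactly what makes the hypothesis $\epsilon<\tfrac16$ meaningful: taking $\epsilon^*=2\epsilon$ in Proposition~\ref{prop - energy for Z with time dependence} requires $b<1-2\epsilon$, and $\tfrac12+\epsilon\les 1-2\epsilon$ iff $\epsilon\les\tfrac16$. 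So the ``$1+\epsilon$'' in the corollary is a typographical slip for ``$\tfrac12+\epsilon$''; you should record that correction rather than assert the stated hypotheses close the argument, because as literally written they do not. With the corrected hypothesis, the choice $\epsilon^*=2\epsilon$ satisfies $\epsilon^*<\tfrac12$ and $b<1-\epsilon^*$, the monotonicity $\|F'\|_{Z^{s,b-1+\epsilon^*}_\pm}$ is not even needed, and your argument goes through.
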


We now come to the proof of Proposition \ref{prop - uniqueness}.

\begin{proof}[Proof of Proposition \ref{prop - uniqueness}.]
It is enough to consider the case $\frac{-1}{2}<r \les s<0$. Choose $\epsilon>0$ sufficiently small such that
            \begin{equation}\label{prop - uniqueness - assump 1} r >\frac{-1}{2} + 4 \epsilon \end{equation}
and
             \begin{equation} \label{prop - uniqueness - assump 2} \frac{1}{2}<b< \frac{1}{2} + \epsilon.\end{equation}
A standard argument using Corollary \ref{cor - energy inequality with time dilation} reduces the problem to obtaining the estimates
    \begin{align}\label{prop - uniqueness - ineq Z}
      \| \psi \phi \|_{Z^{s, b - 1 + 2\epsilon}_\pm} &\lesa \| \psi \|_{Z^{s, b}_\pm} \| \phi \|_{Z^{r, b}_\mp},
    \\
       \| \psi \phi \|_{Y^{s, 2\epsilon-1}_\pm} &\lesa \| \psi \|_{Z^{s, b}_\pm} \| \phi \|_{Z^{r, b}_\mp}, \label{prop - uniqueness - ineq Y}\\
      \| \psi \|_{Y^{s, 2\epsilon -1 }_\pm} &\lesa \| \psi \|_{Z^{s, b}_\mp}. \label{prop - uniqueness - mass term}
    \end{align}
We start with (\ref{prop - uniqueness - ineq Z}). By Lemma \ref{lem - Z product estimate} we need
            $$ s \prec \{ s, b\}, \qquad b-1 + 2 \epsilon \prec \{ b, r\}.$$
The first condition is straight forward since $s>\frac{-1}{2}$ and $b>\frac{1}{2}$. For the second we need
            $$b + r \g0, \qquad b-1 + 2\epsilon \les \min\{ b, r\}, \qquad b-1+2\epsilon < b + r - \frac{1}{2}$$
which all hold in view of the assumptions (\ref{prop - uniqueness - assump 1}) and (\ref{prop - uniqueness - assump 2}).

To prove (\ref{prop - uniqueness - ineq Y}), we observe that by an application of the triangle inequality on the Fourier transform side, it suffices to show that
            $$ \| \psi \phi \|_{Y^{s, -1}_\pm} \lesa \| \psi \|_{Z^{s, b-2\epsilon}_\pm} \| \phi \|_{Z^{r - 2\epsilon, b}_\mp}.$$
By letting $a_0 = s$ and $b_0 = r- 4\epsilon$ in Lemma \ref{lem - Y nonlinear est}, we can reduce this to showing
        \begin{equation}\label{prop - uniqueness - Y est cond} \begin{split} s \prec \{s, b\}, \qquad r - 4\epsilon \prec \{ b-2\epsilon, r - 2\epsilon\},  \qquad s \prec\{ s , r +1 - 4 \epsilon \} \\
                            s + b - 2\epsilon> \frac{-1}{2}, \qquad r + b - 2\epsilon >  \frac{-1}{2}. \end{split} \end{equation}
The first condition is obvious. For the second condition we need
        $$ b - 2 \epsilon + r - 2\epsilon \g 0, \qquad r - 4\epsilon \les \min\{ b-2 \epsilon, r-2\epsilon\}, \qquad r-4\epsilon< b-2\epsilon + r-2\epsilon - \frac{1}{2}$$
which all follow from (\ref{prop - uniqueness - assump 1}) and (\ref{prop - uniqueness - assump 2}). The third condition in (\ref{prop - uniqueness - Y est cond}) can be written as
        $$ s + r + 1 - 4 \epsilon\g0, \qquad s \les \min\{ s, r+1  - 4 \epsilon\}, \qquad  s< s + r + 1 - 4\epsilon - \frac{1}{2}$$
and again each of these inequalities follows from (\ref{prop - uniqueness - assump 1}), (\ref{prop - uniqueness - assump 2}) and $r\les s <0$. The remaining conditions in (\ref{prop - uniqueness - Y est cond}) are also easily seen to be satisfied and so (\ref{prop - uniqueness - ineq Y}) follows.

Finally to prove (\ref{prop - uniqueness - mass term}) we use Holder's inequality to obtain
        \begin{align*}
           \| \psi \|_{Y^{ s,  2\epsilon - 1} _\pm} = \| \al \xi \ar^s \int_\RR \al \tau \pm \xi\ar^{2\epsilon-1} |\widehat{\psi}| d\tau \|_{L^2_\xi} &\lesa \| \al \xi \ar^s  \widehat{\psi}  \|_{L^2_{\tau, \xi}} \\
           &\lesa \| \al \tau \mp \xi \ar^s \al \tau \pm \xi \ar^{|s|} \widehat{\psi } \|_{L^2_{\tau, \xi}}\\
           &\les \|\psi \|_{Z^{s, b}_\mp}.
        \end{align*}
\end{proof}

\section{Global Existence}\label{sec global existence}

Here we prove Corollary \ref{cor - global existence}.

\begin{proof}[Proof of Corollary \ref{cor - global existence}]
The persistence of regularity in Theorem \ref{thm - local existence} shows that it suffices to prove global existence in the case $s=0$ and $\frac{-1}{2}< r\les 0$. Let $(u_\pm, A_\pm)$ be the solution to (\ref{Dirac pm}) and (\ref{Gauge pm}) given by Theorem \ref{thm - local existence}   with initial data $(f_\pm, a_\pm) \in L^2 \times H^r$. We extend $(u_\pm, A_\pm)$ to some maximal interval of existence $(-T, T)$. To show the solution is global in time, it is enough to show that if $T<\infty$ then we have the bound
            \begin{equation}\label{cor - global existence - bound on A} \sup_{t \in (-T, T)} \|A_\pm(t) \|_{H^r} <\infty. \end{equation}
Since supposing (\ref{cor - global existence - bound on A}) holds, we can extend the solution past $(-T, T)$ by using the $L^2$ conservation of $u_\pm$, together with the local well-posedness of Theorem \ref{thm - local existence}. Thus contradicting the fact that $(-T, T)$ was the maximal time of existence. Consequently we must have $T=\infty$.

 To obtain the bound (\ref{cor - global existence - bound on A}) we make use of the following decomposition first used in \cite{Candy2010} based on an idea due to Delgado \cite{Delgado1978}. We split the Dirac component of our solution $u_\pm$ into a mass free part $u^L_\pm$ satisfying
        \begin{align*}  i ( \p_t u^L_\pm \pm \p_x u^L_\pm ) &= - A_{\mp} u^L_\pm   \\
                                                       u^L_\pm(0) &= f_\pm \end{align*}
and a term $u_\pm^N$ with vanishing initial data
         \begin{align*}  i ( \p_t u^N_\pm \pm \p_x u^N_\pm ) &= m u_\mp- A_{\mp} u^N_\pm   \\
                                                       u^N_\pm(0) &= 0. \end{align*}
Observe that $u_\pm = u^L_\pm + u^N_\pm$. Since $A_\pm$ is real valued, a computation shows that
        $$ \p_t \big| u^L_\pm \big|^2 \pm \p_x \big|u_\pm^L\big|^2 = 0$$
and
       $$ \p_t \big|u^N_\pm\big|^2 \pm \p_x \big|u^N_\pm\big|^2 = 2m \Im( u_\mp \overline{u}^N_\pm). $$
Hence
        \begin{equation}\label{cor - global existence - L est}
                | u_\pm^L(t, x) | = | f_\pm (x \mp t) |
        \end{equation}
and, via the Duhamel formula\footnote{For more detail see Proposition 7 in \cite{Candy2010}.},
        \begin{equation}\label{cor - global existence - N est}
            \sup_{ |t|< T} \big( \| u^N_+(t) \|_{L^\infty_x} + \| u^N_-(t) \|_{L^\infty_x} \big) \lesa_{T, m} \| f_+\|_{L^2} + \| f_- \|_{L^2}.
        \end{equation}

To obtain the bound (\ref{cor - global existence - bound on A}),  we note that the equation for $A_\pm$ easily leads to
        \begin{equation}
            \| A_+(t)\|_{H^r_x} + \| A_-(t) \|_{H^r_x} \lesa \| a_+ \|_{H^r} + \| a_- \|_{H^r} + \int_0^t \| u_+ \overline{u}_- \|_{L^2_x} ds
        \end{equation}
and so it suffices to bound $ \int_{|s|<T} \| u_+(s) \overline{u}_-(s)  \|_{ L^2_x} ds$ in terms of the initial data $f_\pm$. If we now use the decomposition $u_\pm = u^L_\pm + u^N_\pm$ we have
    \begin{align}
         u_+ \overline{u}_- =  u_+ \overline{u}_-^L + u_+ \overline{u}_-^N = u_+^L \overline{u}_-^L + u_+^N \overline{u}_-^L + u_+ \overline{u}_-^N .
    \end{align}
The terms involving $u_\pm^N$ are straightforward by (\ref{cor - global existence - N est}), while for the remaining term Holder's inequality followed by a change of variables gives
                $$    \int_{|s|<T} \| u_+^L(s) \overline{u}_-^L(s) \|_{L^2_x(\RR)} ds \lesa_T \| f_+(x-s) \overline{f}_-(x+s) \|_{L^2_{s, x}(\RR^2)} \lesa \|f_+\|_{L^2} \| f_-\|_{L^2}.$$
Therefore the required bound (\ref{cor - global existence - bound on A}) follows.
\end{proof}


\section*{Appendix - Proof of  \eqref{lem - time dependence of Sobolev norms - main eqn 1}}
Here we will sketch the proof of \eqref{lem - time dependence of Sobolev norms - main eqn 1}. This result is essentially well-known, but for the readers convenience we will give the outline of the proof.
\begin{proof}[Proof of estimate \ref{lem - time dependence of Sobolev norms - main eqn 1}]
  We start by noting that the inequality (\ref{lem - time dependence of Sobolev norms - main eqn 1}) follows immediately from the estimates
           \begin{equation}\label{appendix - besov product estimate} \| f g\|_{H^s} \lesa \|f \|_{B^{\frac{1}{2}}_{2, 1}} \| g\|_{H^s}\end{equation}
  and
            \begin{equation}\label{appendix - besov scale invariance} \| \rho_T(t) \|_{B^{\frac{1}{2}}_{2, 1}} \lesa  \| \rho \|_{B^{\frac{1}{2}}_{2, 1}}\end{equation}
  where $ \| f\|_{B^{\frac{1}{2}}_{2, 1}} = \sum_{N \in 2^{\NN}} N^{\frac{1}{2}} \|  f_{ N} \|_{L^2}$ and
    $$f_{ N} =\widehat{P_{ N} f} = \chi_{\{|\xi| \sim N\}} \widehat{f}$$
    for $N>1$ with $\widehat{ f}_{ 1} = \chi_{\{|\xi|\lesa 1\}} \widehat{f}$. We use $\chi_\Omega$ to denote the characteristic function of the set $\Omega$. We also use the notation $\widehat{f}_{\ll N} = \chi_{\{ |\xi|\ll N\}}\widehat{f}$. To prove (\ref{appendix - besov product estimate}) we recall the characterisation
            $$ \| f \|_{H^s}^2 \approx \sum_{N \in 2^{\NN}} N^{2s} \| f_N \|_{L^2}^2.$$
  as well as the Trichotomy formula
            $$ P_{N}( fg) \approx f_{\ll N} g_N + f_N g_{\ll N} + \sum_{M \g N} P_{N}( f_M g_M)$$
  where the sum is over dyadic numbers $M \in 2^{\NN}$. We estimate each of these terms separately. For the first term we observe that
    $$  \| f_{\ll N} g_N \|_{L^2} \lesa \| \widehat{f}_{\ll N} \|_{L^1} \| \widehat{g}_N \|_{L^2} \lesa \Big(\sum_{M\ll N} M^{\frac{1}{2}} \| f_M \|_{L^2} \Big) \| g_N \|_{L^2} $$
  and so
    \begin{align*}
        \sum_{N \in 2^{\NN}} N^{2s} \| f_{\ll N} g_N \|_{L^2}^2 &\lesa  \sum_{N \in 2^{\NN}} N^{2s}  \Big(\sum_{M\ll N} M^{\frac{1}{2}} \| f_M \|_{L^2} \Big)^2 \| g_N \|_{L^2}^2\\
                    &\lesa  \Big(\sum_{M \in 2^{\NN}} M^{\frac{1}{2}} \| f_M \|_{L^2} \Big)^2\sum_{N \in 2^{\NN}} N^{2s} \| g_N \|_{L^2}^2
                    \approx \| f\|_{B^{\frac{1}{2}}_{2, 1}}^2 \| g\|_{H^s}^2.
        \end{align*}
To estimate the term $f_N g_{\ll N}$ a similar computation gives
        \begin{align*}
            \sum_{N \in 2^{\NN}} N^{2s} \| f_{ N} g_{\ll N} \|_{L^2}^2 &\lesa  \sum_{N \in 2^{\NN}} N^{2s}  \Big(\sum_{M\ll N} M^{\frac{1}{2}} \| g_M \|_{L^2} \Big)^2 \| f_N \|_{L^2}^2. \end{align*}
Now since $s<\frac{1}{2}$, we have
        $$ \Big( \sum_{M \ll N} M^{\frac{1}{2}} \| g_M\|_{L^2}\Big)^2 \lesa \Big( \sum_{M\ll N} M^{1-2s} \Big) \Big( \sum_{M \ll N } M^{2s} \| g_M \|_{L^2}^2\Big) \lesa N^{1-2s} \| g\|_{H^s}$$
and therefore
        $$ \sum_{N \in 2^{\NN}} N^{2s} \| f_{ N} g_{\ll N} \|_{L^2}^2 \lesa \| g\|_{H^s}^2 \sum_{N \in 2^{\NN} } N \| f_N\|_{L^2}^2 \approx \| f\|_{H^{\frac{1}{2}}}^2 \| g\|_{H^s}^2 \lesa \| f\|_{B^{\frac{1}{2}}_{2, 1}}^2 \| g\|_{H^s}^2.$$
Finally, for the remaining term $\sum_{M>N} P_N( f_M g_M)$, we note that
        \begin{align*}
            \big\| \sum_{M>N} P_N(f_M g_M) \big\|_{L^2} &\lesa \sum_{M>N} \| P_N(f_M g_M) \|_{L^2} \\
                                            &\lesa \sum_{M>N} N^{\frac{1}{2}} \| f_M\|_{L^2} \| g_M \|_{L^2} \\
                                            &\lesa N^{\frac{1}{2}} \Big( \sum_{M>N} M^{-2s} \|f_M\|_{L^2}^2\Big)^\frac{1}{2} \| g\|_{H^s}. \end{align*}
Hence, for $s>\frac{-1}{2}$,
    \begin{align*}
        \sum_{N \in 2^{\NN}} N^{2s} \big\| \sum_{M>N} P_N( f_{ M} g_{M}) \big\|_{L^2}^2 &\lesa \| g\|_{H^s}^2 \sum_{N \in 2^{\NN}} N^{2s+1} \sum_{M>N} M^{-2s} \|f_M\|_{L^2}^2 \\
        &\lesa \| g\|_{H^s}^2 \sum_{M \in 2^{\NN}} M^{-2s} \| f_M\|_{L^2}^2\sum_{N<M} N^{1+2s} \\
        &\lesa \| g\|_{H^s}^2 \sum_{M \in 2^{\NN}} M \| f_M\|_{L^2}^2 \\
        &\lesa \| g\|_{H^s}^2 \| f\|_{B^{\frac{1}{2}}_{2, 1}}^2
    \end{align*}
and so (\ref{appendix - besov product estimate}) follows.

The inequality (\ref{appendix - besov scale invariance}) follows by using the characterisation\footnote{See for instance Theorem 7.47 in \cite{Adams2003}.}
            $$ \| f\|_{B^{\frac{1}{2}}_{2, 1}} \approx \| f\|_{L^2} + \int_{\RR} \big\| f(x) - f(x - y) \big\|_{L^2_x} \frac{dy}{|y|^{1+\frac{1}{2}}} $$
together with a change of variables.
\end{proof}

\bibliographystyle{amsplain}

\begin{thebibliography}{10}

\bibitem{Adams2003}
R.~Adams and J.~Fournier, \emph{Sobolev spaces}, $2^{nd}$ ed., Academic Press,
  2003.

\bibitem{Bourgain1993}
J.~Bourgain, \emph{Fourier transform restriction phenomena for certain lattice
  subsets and applications to nonlinear evolution equations. {II}. {T}he
  {K}d{V}-equation}, Geom. Funct. Anal. \textbf{3} (1993), no.~3, 209--262.


\bibitem{Bournaveas2008}
N.~Bournaveas, \emph{Local well-posedness for a nonlinear {D}irac
  equation in spaces of almost critical dimension}, Discrete Contin. Dyn. Syst.
  \textbf{20} (2008), no.~3, 605--616.

\bibitem{Bournaveas2009a}
\bysame, \emph{Low regularity solutions of the {C}hern-{S}imons-{H}iggs
  equations in the {L}orentz gauge}, Electron. J. Differential Equations
  (2009), No. 114, 10.

\bibitem{Candy2010}
T.~Candy, \emph{Global existence for an {$L^2$} critical nonlinear {D}irac
  equation in one dimension}, Advances in Differential Equations \textbf{16} (2011), no.~7-8, 643--666.

\bibitem{Chern1974}
S.~S.~Chern and J.~Simons, \emph{Characteristic forms and geometric
  invariants}, The Annals of Mathematics \textbf{99} (1974), no.~1, pp. 48--69.

\bibitem{Delgado1978}
V.~Delgado, \emph{Global solutions of the {C}auchy problem for the (classical)
  coupled {M}axwell-{D}irac and other nonlinear {D}irac equations in one space
  dimension}, Proc. Amer. Math. Soc. \textbf{69} (1978), no.~2, 289--296.

\bibitem{Deser1982}
S.~Deser, R.~Jackiw, and S.~Templeton, \emph{{Three-dimensional massive gauge
  theories}}, Physical Review Letters \textbf{48} (1982), no.~15, 975--978.

\bibitem{Ginibre1997}
J.~Ginibre, Y.~Tsutsumi, and G.~Velo, \emph{On the {C}auchy problem for the
  {Z}akharov system}, J. Funct. Anal. \textbf{151} (1997), no.~2, 384--436.


\bibitem{Huh2007}
H.~Huh, \emph{Local and global solutions of the {C}hern-{S}imons-{H}iggs
  system}, J. Funct. Anal. \textbf{242} (2007), no.~2, 526--549.

\bibitem{Huh2010}
\bysame, \emph{Global solutions and asymptotic behaviors of the
  {C}hern-{S}imons-{D}irac equations in {$\mathbb{R}^{1+1}$}}, J. Math. Anal. Appl.
  \textbf{366} (2010), no.~2, 706--713.

\bibitem{Klainerman1993}
S.~Klainerman and M.~Machedon, \emph{Space-time estimates for null forms and
  the local existence theorem}, Comm. Pure Appl. Math. \textbf{46} (1993),
  no.~9, 1221--1268.

\bibitem{Lopez1991}
A.~Lopez and E.~Fradkin, \emph{Fractional quantum hall effect and
  chern-simons gauge theories}, Phys. Rev. B \textbf{44} (1991), no.~10,
  5246--5262.

\bibitem{Machihara2010}
S.~Machihara, K.~Nakanishi, and K.~Tsugawa, \emph{Well-posedness for nonlinear
  dirac equations in one dimension}, Kyoto Journal of Mathematics \textbf{50}
  (2010), no.~2, 403--451.

\bibitem{Machihara2005}
S.~Machihara, \emph{One dimensional {D}irac equation with quadratic
  nonlinearities}, Discrete Contin. Dyn. Syst. \textbf{13} (2005), no.~2,
  277--290.

\bibitem{Machihara2007}
\bysame, \emph{Dirac equation with certain quadratic nonlinearities in one
  space dimension}, Commun. Contemp. Math. \textbf{9} (2007), no.~3, 421--435.

\bibitem{Selberg2007}
S.~Selberg, \emph{Global well-posedness below the charge norm for the
  {D}irac-{K}lein-{G}ordon system in one space dimension}, Int. Math. Res. Not.
  IMRN (2007), no.~17, Art. ID rnm058, 25.

\bibitem{Selberg2010b}
S.~Selberg and A.~Tesfahun,
\emph{Low regularity well-posedness for some nonlinear {D}irac equations in one space dimension},
Differential Integral Equations, \textbf{23} (2010), no.~3-4, 265--278.

\bibitem{Tesfahun2009}
A.~Tesfahun, \emph{Global well-posedness of the 1{D}
  {D}irac-{K}lein-{G}ordon system in {S}obolev spaces of negative index}, J.
  Hyperbolic Differ. Equ. \textbf{6} (2009), no.~3, 631--661.

\end{thebibliography}
\providecommand{\bysame}{\leavevmode\hbox to3em{\hrulefill}\thinspace}
\providecommand{\MR}{\relax\ifhmode\unskip\space\fi MR }
\providecommand{\MRhref}[2]{%
  \href{http://www.ams.org/mathscinet-getitem?mr=#1}{#2}
}
\providecommand{\href}[2]{#2}

\end{document}